\documentclass[10pt]{amsart}
\usepackage{graphicx}
\graphicspath{{figs/}}
\usepackage{amsthm,amsfonts,amssymb,amsmath}
\usepackage[centerlast]{subfigure}
\usepackage[rotate,arrow,matrix]{xy}
\usepackage{tikz}

\DeclareMathOperator{\codim}{codim}

\DeclareMathOperator{\ord}{ord}

\DeclareMathOperator{\surf}{Surf}

\DeclareMathOperator{\bl}{Bl}

\newtheorem{thm}{Theorem}[section]
\newtheorem{lem}[thm]{Lemma}
\newtheorem{coro}[thm]{Corollary}
\newtheorem{prop}[thm]{Proposition}

\theoremstyle{definition}
\newtheorem{df}[thm]{Definition}
\newtheorem{ex}[thm]{Example}

\theoremstyle{remark}
\newtheorem{rem}[thm]{Remark}

\def\ol#1{\overline{#1}}
\def\ul#1{\underline{#1}}
\def\v#1{\overrightarrow{#1}}

\def\N{\mathbb N}
\def\Z{\mathbb Z}

\def\R{\mathbb R}
\def\C{\mathbb C}
\def\S{\mathbb S}
\def\TO{\mathbb T}
\def\FS{\mathfrak S}

\def\M{\mathcal M}

\def\Md{\M^{dec}}
\def\Mc{\M^{comb}}
\def\MgP{\M_{g,P}}
\def\MU#1#2{\ul{\M}_{#1,#2}}
\def\MO#1#2{\ol{\M}_{#1,#2}}
\def\MDgP{\M_{g,P}^{dec}}
\def\MCgP{\M_{g,P}^{comb}}
\def\MUC#1#2{\ul{\M}_{#1,#2}^{comb}}
\def\MOC#1#2{\ol{\M}_{#1,#2}^{comb}}
\def\MUD#1#2{\ul{\M}_{#1,#2}^{dec}}
\def\MOD#1#2{\ol{\M}_{#1,#2}^{dec}}

\begin{document}

\title[Compactifications of Moduli Spaces]{Compactifications of Moduli Spaces and Cellular Decompositions}
\address {Departmento de Econom\'{i}a\\Universidad del Pac\'{i}fico\\
Av. Salaverry 2020, Jes\'{u}s Mar\'{i}a, Lima 11 - Per\'{u}}
\author[J. Z\'{u}\~{n}iga]{Javier Z\'{u}\~{n}iga}
\email{zuniga\underline{ }jj@up.edu.pe}
\date{\today}

\begin{abstract}
This paper studies compactifications of moduli spaces involving closed Riemann surfaces. The first main result identifies the homeomorphism types of these compactifications. The second main result introduces orbicell decompositions on these spaces using semistable ribbon graphs extending the earlier work of Looijenga.
\end{abstract}

\maketitle

\tableofcontents

\section{Introduction}

By a Riemann surface or simply a curve we mean a compact connected complex manifold of complex dimension one. Denote by $\M_{g,n}$ the moduli space of Riemann surfaces with genus $g$ and $n>0$ labeled points. The Deligne-Mumford compactification is denoted by $\MO{g}{n}$. This is a space parameterizing stable Riemann surfaces. Here the word ``stable" refers to the finiteness of the group of conformal automorphism of the surface. Geometrically it means that we only allow double point (also called node) singularities and that each irreducible component of the surface has negative Euler characteristic (taking the labeled points and nodes into account). We can further perform a real oriented blowup along the locus of degenerate surfaces to obtain the space $\MU{g}{n}$. Intuitively, this space is similar to the Deligne-Mumford space but it also remembers the angle at each double point at which the surface degenerated.

The decorated moduli space is denoted by $\Md_{g,n}=\M_{g,n} \times \Delta^{n-1}$ where $\Delta^{n-1}$ is the $(n-1)$ dimensional standard simplex. The decorations can be thought of as hyperbolic lengths of certain horocycles or as quadratic residues of Jenkins-Strebel differentials on a Riemann surface. By choosing an appropriate notion of decoration on a stable Riemann Surface it is possible to construct compactifications $\MOD{g}{n}$ and $\MUD{g}{n}$. The first main result of this paper identifies the homeomorphism type of these compactifications. Let $P$ be a finite set of labels.

\medskip
\noindent {\bf Corollary ~\ref{maincoro1}.} \[ \MUD{g}{P} \cong \MU{g}{P} \times \Delta_P  \] \smallskip \emph{and therefore $\MUD{g}{P}$ is Hausdorff and compact.}

\medskip
\noindent {\bf Theorem ~\ref{mainthm1}.} \emph{There is a map $\MOD{g}{P} \to \MO{g}{P} \times \Delta_P$ which is a homeomorphism in the interior and has conical singularities along the boundary of $\MOD{g}{P}$ and is thus a homotopy equivalence.}
\medskip

It is a known result of Harer, Mumford, Thurston \cite{har86}, Penner \cite{pen87}, Bowditch, Epstein \cite{boweps}, that the decorated moduli space is homeomorphic to the moduli space of metric ribbon graphs denoted by $\Mc_{g,n}$. This later space comes with a natural orbi-cellular structure given by ribbon graphs. In \cite{kon:itma} Kontsevich introduces a way to compactify this space in order to prove Witten's conjecture. Later on Looijenga formalized and extended these ideas in \cite{loo} in connection with the arc complex. The second part of this paper describes a cellular compactification of the ribbon graph space, extending the work of Looijenga. The main results are the following.

\medskip
\noindent {\bf Theorem ~\ref{hmuc}.} \emph{The map $\ul{\Psi}: \MUC{g}{P} \to \MUD{g}{P}$ is a homeomorphism.}

\medskip
\noindent {\bf Theorem ~\ref{mainthm3}.} \emph{The map $\ol{\Psi}: \MOC{g}{P} \to \MOD{g}{P}$ is a homeomorphism.}
\medskip

This new compactification covers Looijenga's and Kontsevich's compactifications and is finer, meaning that it encodes more information. It also seems more relevant to quantum field theory purposes. In particular, it should be possible to describe a BV structure on the cellular chains of our compactification and construct a solution to the quantum master equation in a future work. This solution is purely combinatorial and so it avoids the use of string vertices or geometric chains.

I would like to thank Sasha Voronov for his generosity and guidance, Eduard Looijenga for his patience answering my questions, and Kevin Costello for sharing his own ideas about this work with me. I am also grateful to Jim Stasheff for reviewing an early draft of this paper. Finally I would like to acknowledge the enormous contribution made by the referee to the quality and clarity of the present exposition. 

\section{Real Oriented Blowups}

\begin{figure} 
\includegraphics[width=8cm]{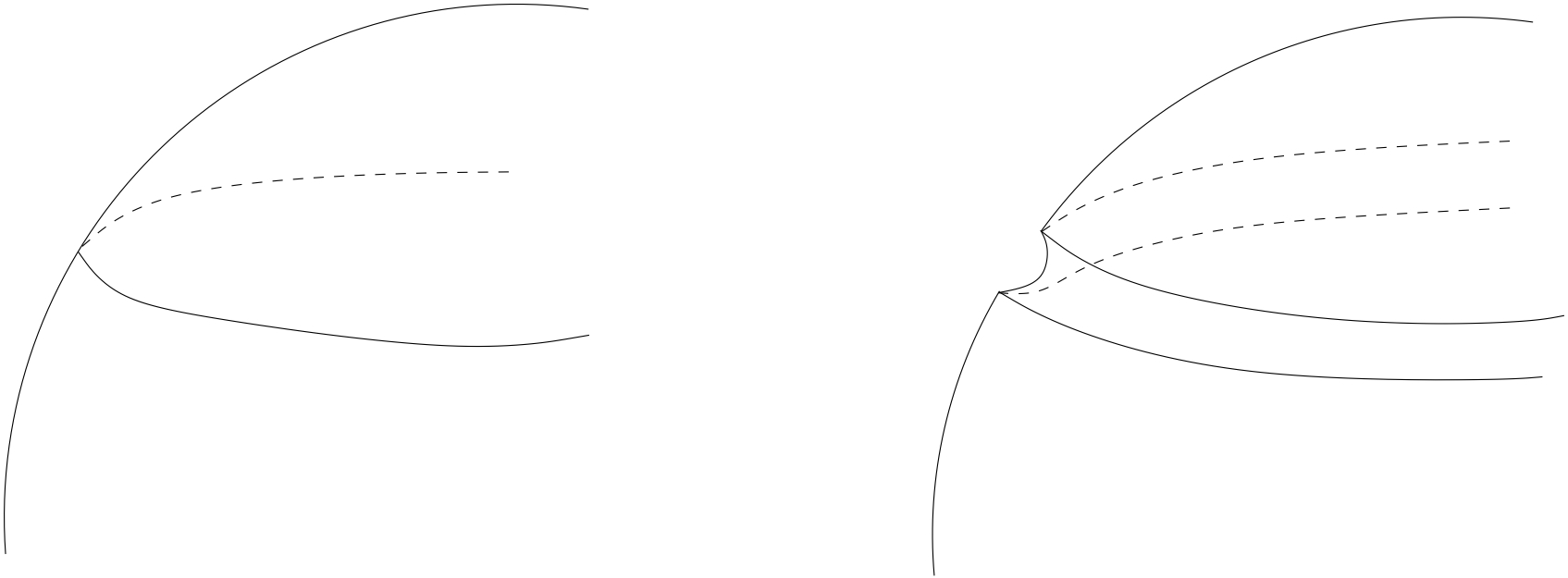}
\caption{On the left: A three dimensional manifold with a one dimensional submanifold of the boundary. On the right: its real oriented blowup.} \label{bbl}
\end{figure}

We will use a blowup construction in the PL category. Given a manifold $M$ and a closed submanifold $N$ the real (or directional) oriented blowup $\bl_N(M)$ can be defined by gluing $M -N$ to the ($\codim N$-1)-dimensional spherical bundle of rays of the normal bundle of $N$ in $M$. This is homeomorphic to the result of carving an open tubular neighborhood of $N$ out of $M$. There is a natural projection map $\bl_N(M) \to M$. The construction can be generalized to the PL category of manifolds with boundary and the submanifold $N$ can be replaced by a union of submanifolds with some transversality condition.

\begin{lem} 
Blowing up a submanifold of the boundary of a manifold does not change the homeomorphism type of the original manifold. \label{busb}
\end{lem}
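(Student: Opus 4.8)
The plan is to reduce to a local normal form near $N$ and exploit the fact that, because $N$ lies in $\partial M$, the exceptional fibre of the blow-up is a contractible disc rather than a sphere; this is precisely the feature that fails for interior submanifolds.

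First I would invoke the PL tubular neighbourhood theorem for $N\subset M$. Since $N\subseteq\partial M$, a tubular neighbourhood may be taken to be the total space of a bundle $\pi\colon U\to N$ whose fibre is the half-space $\R^{l}\times\R_{\ge 0}$, with $l=\codim_{\partial M}N$, arranged so that $U\cap\partial M$ is the sub-bundle $\{t=0\}$ and $N$ is the zero section. Because the real blow-up is a local construction around $N$, one has a decomposition $\bl_N(M)=(M\setminus\inte U)\cup_{F}\bl_N(U)$, glued along the frontier $F$ of $U$ in $M$, and it suffices to construct a homeomorphism $\bl_N(U)\to U$ that is the identity near $F$, so that it glues to the identity on $M\setminus\inte U$.

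Next I would carry out the computation fibrewise. Over a point of $N$, introduce polar-type coordinates $(r,\omega)$ on the half-space fibre, with $r\ge 0$ and $\omega$ ranging over the space of inward unit rays, which is the closed hemisphere $H=\{\,|\mathbf x|^{2}+t^{2}=1,\ t\ge 0\,\}\cong D^{l}$. In these coordinates the blow-up of the origin is just $[0,\infty)_{r}\times H$, with exceptional fibre $\{0\}\times H$. The key point is that $H$ is a disc, not a sphere: after the identification $H\cong D^{l}$, the space $[0,\infty)\times D^{l}$ carries a single corner edge $\{0\}\times S^{l-1}$, and straightening that corner --- legitimate in the PL category, and achievable by a homeomorphism supported in an arbitrarily small neighbourhood of the exceptional fibre --- identifies $[0,\infty)\times H$ PL-homeomorphically with $[0,\infty)\times\R^{l}\cong\R^{l}\times\R_{\ge 0}$, by a map equal to the identity away from $\{r\approx 0\}$, in particular near $F$. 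Performing this straightening fibrewise, compatibly with the bundle structure of $U$ (via local trivialisations and a partition of unity on $N$, or by appealing to the canonical form of the PL corner-straightening), yields the desired homeomorphism $\bl_N(U)\cong U$ rel $F$; gluing with the identity on $M\setminus\inte U$ gives $\bl_N(M)\cong M$. For a union of boundary submanifolds meeting in the prescribed transverse fashion one iterates this over the strata, each local model being an iterated product of the one above.

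The main obstacle is the bookkeeping in this last step: one must perform the corner-straightening continuously over the base $N$ and simultaneously rel the outer frontier $F$, so that all the pieces fit together. Everything else --- existence and uniqueness of PL collars and tubular neighbourhoods, and the locality of the blow-up construction --- is standard; the conceptual content of the lemma is simply that the exceptional fibre of a blow-up along a boundary submanifold is a contractible disc, which can therefore be absorbed back into the manifold.
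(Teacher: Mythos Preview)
Your proposal is correct and follows essentially the same approach as the paper: both rest on the observation that the normal bundle of $N\subset\partial M$ in $M$ has half-space fibres, so the exceptional fibre of the blow-up is a half-disk (your hemisphere $H\cong D^{l}$) rather than a sphere. The paper's proof is a two-sentence sketch of exactly this idea, whereas you supply the details it omits --- the tubular neighbourhood reduction, the fibrewise corner-straightening, and the gluing rel the frontier --- so your version is a rigorous fleshing-out of the same argument rather than a different one.
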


\begin{proof}
The normal bundle of a submanifold in the boundary of $M$ is a closed half space bundle. Therefore the bundle of rays is a half sphere bundle. This process enlarges the boundary of $M$ without changing its homeomorphism type as in Figure~\ref{bbl}. A homeomorphism can be realized by using a tubular neighborhood of the submanifold.
\end{proof}

Given a union of PL-submanifolds intersecting multi-transversely, it will be sometimes necessary to blow up such union with the aid of a filtration indexed by dimension. In this case we will blow up from the lowest dimensional to the highest dimensional elements of the filtration. We will denote by $\bl_F(M)$ the sequential blowup of $M$ along the filtration $F=\{ P_i \}$ indexed by dimension. An example can be seen in Figure~\ref{blsb}.

In what follows the symbol ``$\cong$" means homeomorphic and ``$\simeq$" means homotopic.

\begin{figure} 
\includegraphics[width=12.5cm]{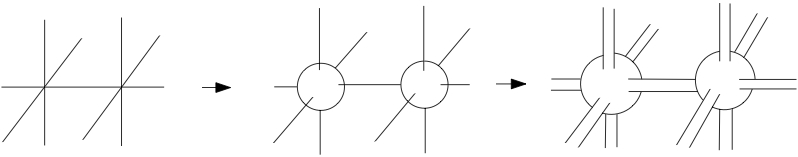}
\caption{$\bl_{\{ \{(0,0,0)\}, \{(1,0,0)\}, \{(0,0,z)\}, \{(0,y,0)\}, \{(x,0,0)\}, \{(1,0,z)\}, \{(1,y,0)\} \}}(\R^3)$} \label{blsb}
\end{figure}

\begin{lem}
Given two manifolds $X$,$Y$ and a submanifold $Z \subset X$ we have \[ \bl_{Z\times Y}(X \times Y) \cong \bl_Z(X) \times Y \] \label{blowupxprod} 
\end{lem}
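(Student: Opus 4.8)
The plan is to reduce everything to a local statement about normal bundles and then invoke the definition of the real blowup. Recall from the discussion above that $\bl_Z(X)$ is obtained by replacing $Z$ with the spherical bundle of rays $S(\nu_Z)$ of the normal bundle $\nu_Z$ of $Z$ in $X$, glued to $X - Z$ along the obvious collar; equivalently it is $X$ with an open tubular neighborhood of $Z$ carved out. So the first step is to identify the normal bundle of $Z \times Y$ inside $X \times Y$. Since $T(X \times Y) = \pi_X^* TX \oplus \pi_Y^* TY$ and $T(Z \times Y) = \pi_Z^* TZ \oplus \pi_Y^* TY$, the normal bundle is $\nu_{Z \times Y} \cong \pi_Z^*\,\nu_Z$, the pullback of $\nu_Z$ under the projection $Z \times Y \to Z$. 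This is the key algebraic observation and it is routine.

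Next I would push this through the construction. The sphere bundle of rays of a pulled-back bundle is the pullback of the sphere bundle of rays, so $S(\nu_{Z\times Y}) \cong S(\nu_Z) \times Y$. Choosing a tubular neighborhood $U$ of $Z$ in $X$ compatible with the normal bundle identification, $U \times Y$ is a tubular neighborhood of $Z \times Y$ in $X \times Y$, and carving it out gives $(X - U) \times Y$; gluing back the ray bundle replaces $(X-U) \times Y$ along its new boundary $S(\nu_Z) \times Y$ exactly as in the construction of $\bl_Z(X)$, but crossed everywhere with $Y$. Tracking the gluing maps, each step in the assembly of $\bl_{Z\times Y}(X\times Y)$ is the corresponding step in the assembly of $\bl_Z(X)$ times the identity on $Y$, so the two spaces are homeomorphic, and in fact the homeomorphism commutes with the projections to $Y$.

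The step I expect to require the most care is making the tubular neighborhood choices on both sides genuinely compatible, so that the carved-out pieces and the glued-in ray bundles match up on the nose rather than merely up to isotopy — in particular one wants the tubular neighborhood of $Z \times Y$ to be of product form $U \times Y$, which is possible but deserves a sentence. One should also note the PL caveat: the paper works in the PL category with manifolds with boundary, so ``normal bundle'' and ``tubular neighborhood'' should be read in the PL sense (PL microbundles / PL regular neighborhoods), and the splitting $T(X\times Y) \cong \pi_X^* TX \oplus \pi_Y^* TY$ should be replaced by the analogous statement for PL normal microbundles, which holds for the same formal reasons. If $Z$ (or part of it) lies in $\partial X$, then by Lemma~\ref{busb} the blowup only enlarges the boundary without changing the homeomorphism type on either side, so that case is consistent with the claimed identity as well. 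Finally, although the statement is phrased for a single submanifold, the same argument applied inductively handles a filtration $F$ on $X$ pulled back to $X \times Y$, giving $\bl_{F \times Y}(X \times Y) \cong \bl_F(X) \times Y$, which is the form in which the lemma will actually be used.
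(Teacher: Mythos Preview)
Your proposal is correct and follows essentially the same route as the paper: the key observation in both is that the normal bundle of $Z\times Y$ in $X\times Y$ is the pullback of $\nu_Z$ along the projection (equivalently, $\nu_Z \times Y$), from which the product splitting of the blowup is immediate. The paper's proof is a two-line version of what you wrote; your additional remarks on compatible tubular neighborhoods, the PL setting, the boundary case, and the extension to filtrations are all sound elaborations but not needed for the lemma as stated.
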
 

\begin{proof} Let $T(X)$ denote the tangent bundle of $X$ and $\nu_X(Z)$ the normal bundle of $Z$ in $X$. Since $T(Z \times Y) \cong T(Z) \times T(Y)$ the vectors normal to $Z \times Y$ in $T(X \times Y) \cong T(X) \times T(Y)$ do not include any vector in the second factor. Thus $\nu_{X \times Y}(Z\times Y) \cong \nu_X(Z) \times Y$ and the result follows.
\end{proof}

Let $\S^{2n-1}=\{ \v{z} \in \C^n | \, \sum |z_i|^2 =1\}$ and $\TO^n = (\S^1)^n$. We also denote by $B^n$ the {\bf $n$-dimensional open unit ball} $\{\v{x} \in \R^n | \, |\v{x}| < 1 \}$ and by $\ol{B^n}$ its closure in $\R^n$.

\begin{lem}
Let $\TO^n$ act on $\S^{2n-1}$ by $(\theta_1,...\theta_n) \cdot (z_1,...,z_n) = (\theta_1 z_1,..., \theta_n z_n)$. Then \[\S^{2n-1} / \TO^n \cong \Delta^{n-1}\] \label{sphereovertorus}
\end{lem}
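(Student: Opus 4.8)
The plan is to exhibit an explicit continuous surjection $\S^{2n-1}\to\Delta^{n-1}$ whose fibers are exactly the $\TO^n$-orbits, and then invoke compactness to conclude it descends to a homeomorphism on the quotient. The natural choice is the ``modulus map'' $\mu(z_1,\dots,z_n)=(|z_1|^2,\dots,|z_n|^2)$. Since $\sum|z_i|^2=1$ on $\S^{2n-1}$, the image lies in the standard simplex $\Delta^{n-1}=\{(t_1,\dots,t_n)\in\R^n_{\ge 0}\mid\sum t_i=1\}$, and $\mu$ is manifestly continuous. First I would check surjectivity: given $(t_1,\dots,t_n)\in\Delta^{n-1}$, the point $(\sqrt{t_1},\dots,\sqrt{t_n})$ maps to it. Next, $\mu$ is clearly $\TO^n$-invariant since $|\theta_i z_i|=|z_i|$ for $\theta_i\in\S^1$, so it factors through a continuous map $\bar\mu:\S^{2n-1}/\TO^n\to\Delta^{n-1}$.

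The key point is that the fibers of $\mu$ are precisely the $\TO^n$-orbits, so that $\bar\mu$ is a bijection. If $\mu(z)=\mu(w)$ then $|z_i|=|w_i|$ for all $i$, so for each $i$ with $z_i\neq 0$ there is a unique $\theta_i\in\S^1$ with $w_i=\theta_i z_i$; for the indices $i$ with $z_i=0$ (hence $w_i=0$) choose $\theta_i=1$ arbitrarily. Then $(\theta_1,\dots,\theta_n)\cdot z=w$, so $z$ and $w$ lie in the same orbit. Combined with the obvious fact that points in the same orbit have the same image, this shows $\bar\mu$ is injective; together with surjectivity of $\mu$, $\bar\mu$ is a continuous bijection.

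Finally I would upgrade the continuous bijection to a homeomorphism by a compactness argument: $\S^{2n-1}$ is compact, the quotient map $\S^{2n-1}\to\S^{2n-1}/\TO^n$ is continuous and surjective, hence $\S^{2n-1}/\TO^n$ is compact; $\Delta^{n-1}$ is Hausdorff; and a continuous bijection from a compact space to a Hausdorff space is a homeomorphism. This gives $\S^{2n-1}/\TO^n\cong\Delta^{n-1}$.

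I do not expect any genuine obstacle here; the only mildly delicate point is the behavior over the boundary faces of $\Delta^{n-1}$, where some coordinates vanish and the $\TO^n$-action is no longer free, but the orbit-versus-fiber analysis above handles those points uniformly, and the compact-to-Hausdorff trick sidesteps any need to write down an explicit continuous inverse or to worry about how the torus orbits collapse near the boundary. One could alternatively note that this identification is compatible with the PL structures (the map $\mu$ is PL after the substitution $t_i=|z_i|^2$ on each coordinate chart), which is what is actually needed for the later blowup arguments, but the homeomorphism statement as phrased follows already from the above.
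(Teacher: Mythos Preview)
Your proof is correct and follows essentially the same approach as the paper: both use the map $(z_1,\dots,z_n)\mapsto(|z_1|^2,\dots,|z_n|^2)$ and observe that its fibers coincide with the $\TO^n$-orbits. You are simply more explicit than the paper in verifying surjectivity, the orbit--fiber identification, and in invoking the compact-to-Hausdorff argument to conclude the induced map is a homeomorphism.
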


\begin{proof}
Notice that $\bl_{\{\v{0}\}}(\C^n) \cong \C^n - B^{2n}$ by extending to the boundary the homeomorphism taking $\v{x}$ to $\v{x} + \frac{\v{x}}{|\v{x}|}$. Thus the boundary generated is isomorphic to $\S^{2n-1}$. Consider the map $\pi : \S^{2n-1} \to \Delta^{n-1}$ defined by $(z_1,...,z_n) \mapsto (|z_1|^2,...,|z_n|^2)$. The preimage of a point in the simplex corresponds with the orbit of the torus action and hence the map descends to the desired homeomorphism after taking the quotient.
\end{proof}

In order to clarify the homeomorphism type of certain quotient space we introduce the notion of conical singularity. If $X$ is a topological space let $CX$ be its cone and $v$ the vertex. When $X$ is a topological manifold the resulting cone is locally Euclidean in $CX-\{ v \}$. When $CX$ is not locally Euclidean at $v$ we call this vertex a {\bf conical singularity}. If $X$ and $Y$ are topological manifolds and $v$ is a conical singularity of $CX$ then any point in $\{ v \} \times Y \subset CX \times Y$ is also called a conical singularity.

\begin{lem}
As in Lemma~\ref{sphereovertorus}, the torus $\TO^n$ acts on the boundary of $\bl_{\{\v{0}\}}(\C^n)$. The quotient $\bl_{\{\v{0}\}}(\C^n) / \TO^n$ is the disjoint union of $\C^n - \{ \v{0} \}$ and $\Delta^{n-1}$ and has conical singularities along the second subspace. This space is contractible and hence homotopic to $\C^n$.
\end{lem}

\begin{proof}
We can view the quotient $\bl_{\{\v{0}\}}(\C^n) / \TO^n$ as the union of $\C^n - \{ \v{0} \}$ with $\Delta^{n-1}$ due to Lemma~\ref{sphereovertorus}. Therefore this quotient can be understood as an enlargement of the origin into the simplex. However this enlargement is not homeomorphic to $\C^n$. To see this take a point $p$ in the interior of the simplex. A neighborhood of this point in the simplex is homeomorphic to $B^{n-1}$. The preimage under $\pi: \S^{2n-1} \to \Delta^{n-1}$ of each point in this neighborhood under the torus action is $\TO^n$. The normal bundle has rank one (it is the bundle of rays in $\C^n$ orthogonal to the sphere $\S^{2n-1}$). Thus a neighborhood of $p$ is homeomorphic to $(B^{n-1} \times [0,1) \times \TO^n) / \TO^n$ where the semi-open interval $[0,1)$ corresponds with the normal bundle and the torus acts only at level zero (which corresponds to the simplex). At $p$ this quotient gives the cross product of the torus with the interval where one end is collapsed to a point. But this is exactly $(C \TO^n)^\circ$, \emph{i.e.} the interior of the cone over the torus. Thus a neighborhood of $p$ in $\bl_{\{\v{0}\}}(\C^n) / \TO^n$ is homeomorphic to $B^{n-1} \times (C \TO^n)^\circ$ and $p$ is a conical singularity. We will refer to these open sets as {\bf toric neighborhoods}. For a point on the boundary of the simplex a similar (but more careful) analysis yields the same toric neighborhoods. Away from the simplex the quotient of the blowup is still homeomorphic to $\C^n$.

Contracting radially gives a $\TO^n$-equivariant deformation retraction of  $\bl_{\{\v{0}\}}(\C^n)$ onto its boundary:  $\S^{2n-1}$. Therefore the quotient deformation retracts onto $\Delta^{n-1}$ which is contractible. In particular, this space is homotopic to $\C^n$.
\end{proof}

\begin{coro}
Denote by $D_m$ the intersection of the $m$ complex hyperplanes in $\C^n$ given by $z_i=0$ where $1\le i \le m$. The torus $\TO^m$ acts on the boundary of $\bl_{D_m}(\C^n)$ and in fact the quotient is homeomorphic to $\left( \bl_{\{ \v{0} \}} \C^m \right) / \TO^m \times \C^{n-m}$. In particular, it has conical singularities at the points of the simplex in the first factor and it is contractible.
\label{torichood}
\end{coro}

\begin{proof}
$D_m$ can be described as $\{(0,...,0,z_{m+1},...,z_n) \} \cong \{\v{0}\} \times \C^{n-m}$ and $\C^n \cong \C^m \times \C^{n-m}$ . By Lemma~\ref{blowupxprod} \[ \bl_{\{ \v{0} \} \times \C^{n-m}} (\C^{m} \times \C^{n-m}) \cong \left( \bl_{\{ \v{0} \}} \C^m \right) \times \C^{n-m} \] and therefore we can apply the previous lemma to the first component since $\TO^n$ only acts on the first factor.
\end{proof}

Consider the $n-1$ dimensional standard simplex $\Delta^{n-1}$ with vertices labeled by the set $[n]=\{1,...,n\}$. Since every face can be identified with a subset of $[n]$ given a subset $P \subset 2^{[n]}$ we denote by $\bl_P(\Delta^{n-1})$ the blowup of the simplex along the filtration indexed by dimension obtained from the faces induced by $P$. Notice that blowing up along sets with $n-1$ or $n$ elements does not change the geometry in a meaningful way since the result is linearly isomorphic to the simplex. Therefore the only contributions come from blowing up faces of codimension at least two.

\begin{figure} 
\includegraphics[width=3.9cm]{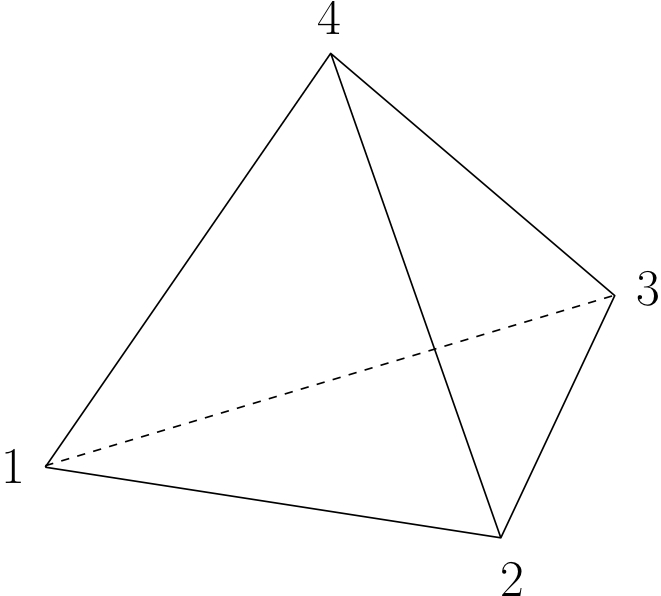} \quad \qquad
\includegraphics[width=3.7cm]{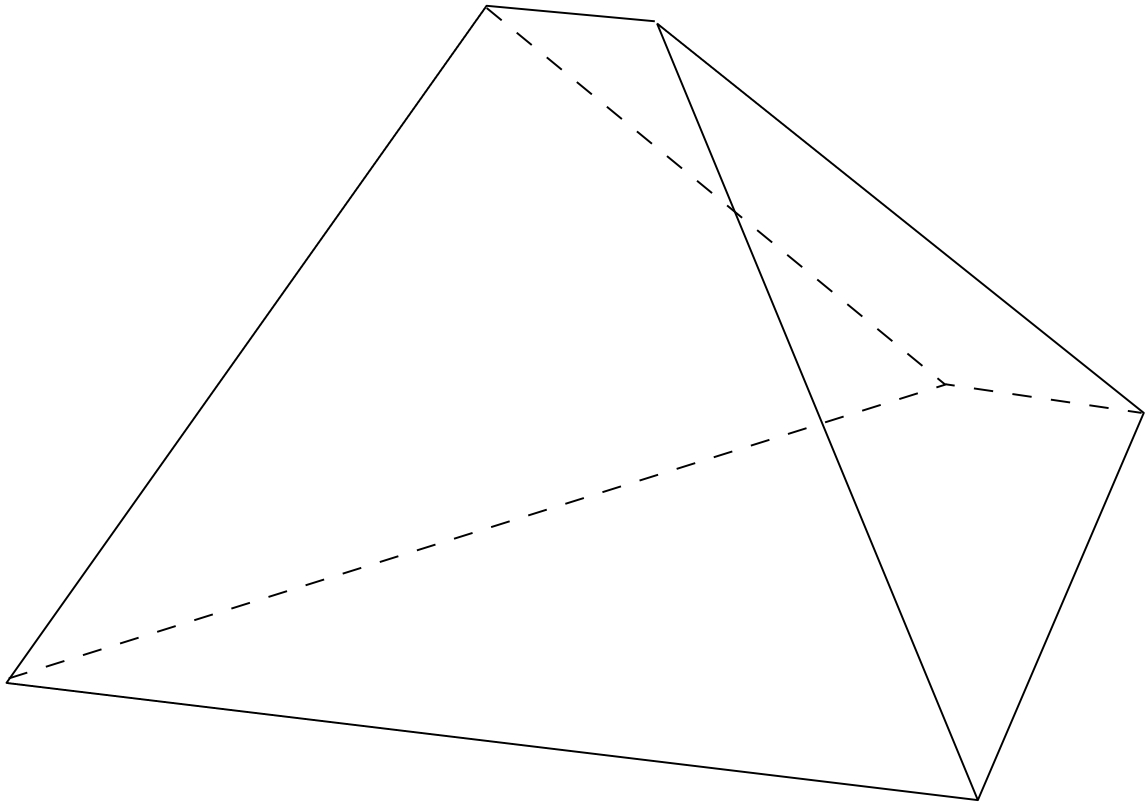} \par \bigskip \bigskip
\includegraphics[width=3.8cm]{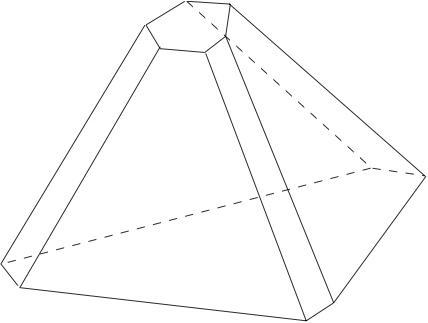} \quad \qquad
\includegraphics[width=3.7cm]{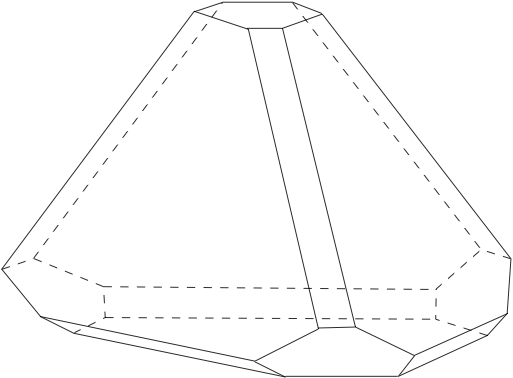}
\caption{The blowup of $\Delta^3$ along the sets $\varnothing$, \{\{3,4\}\}, \{\{4\}, \{1,4\}, \{2,4\}, \{3,4\}\}, and \{\{1\}, \{2\}, \{3\}, \{4\}, \{1,2\}, \{1,3\}, \{1,4\}, \{2,3\}, \{2,4\}, \{3,4\}\}.} \label{bu}
\end{figure}

\begin{rem}
If $P=2^{[n]}$ then $\bl_P (\Delta^{n-1})$ produces the $n-1$ dimensional cyclohedron. This can be made explicit using for example Proposition 4.3.1 in \cite{deva}. The associahedra can also be obtained as a blowup of standard simplices. For instance the three dimensional associahedron ${\mathcal K}^3$ corresponds with the blowup of $\Delta^3$ along $P=\{ \{2\}, \{3\}, \{1,2\}, \{2,3\}, \{3,4\} \}$.
\end{rem}

In what follows it will be necessary to consider the real oriented blowup in the category of orbifolds. Locally, an orbifold looks like $\R^n / G$ where $G$ is a finite group acting linearly. This allow us to define the real oriented blowup along a subspace of the quotient by first blowing up the orbit in $\R^n$ and then taking the quotient by the induced action. The compatibility conditions of the orbifold define the blowup globally.

\section{Decorated Moduli Spaces}

\subsection{Compactifications} 

The moduli space $\MgP$ parametrizes conformal classes of Riemann surfaces of genus $g$ with a fixed finite subset $P$ of labeled points. We will also denote this moduli space as $\M_{g,n}$ where $n=|P|$. The {\bf topological type} of a surface is defined as the pair $(g,n)$. The symmetric group $\FS_P$ acts by permuting the labels. A {\bf decoration} of a labeled point is a non-negative real number associated to that point. We require that each labelled point has a decoration and that the total sum of decorations is one. This gives $\MgP \times \Delta_P$ where $\Delta_P$ is the $|P|-1$ dimensional standard simplex spanned by $P$. Denote this space by $\MDgP$ and call these surfaces {\bf $P$-labeled Riemann surfaces of genus $g$ decorated by real numbers}. Its dimension is $6g+3n-7$. In \cite{mupe} there is a description of an orbicell decomposition for $\MDgP$ and $\MDgP / \FS_P$ in terms of ribbon graphs. The aim of this paper is to construct orbicell decompositions for compactified versions of these spaces using ribbon graphs and relate their homeomorphism types to those of $\MU{g}{n}$ and $\MO{g}{n}$.

For $\MgP$ it is possible to take the Deligne-Mumford compactification $\MO{g}{P}$ which parametrizes isomorphism classes of $P$-labeled stable Riemann surfaces. We can further perform a real oriented blowup along the locus of stable curves with singularities to obtain the moduli space $\MU{g}{P}$ as in \cite{ksv} which is called the {\bf moduli space of $P$-labeled stable Riemann surfaces decorated by real tangent directions}. To better understand this space consider the normal bundles to the locus of stable curves with singularities. Locally, when we have only one singularity, the normal bundle is canonically isomorphic to the tensor product of two tangent spaces of the surface, one for each side of the singularity. Points in the boundary of the real oriented blowup then correspond to real rays in the tensor product. This information encodes an angle at each double point of the surface and all possible angles describe a circle. The natural projection $\MU{g}{P} \to \MO{g}{P}$ has as preimages finite quotients of real tori (a product of circles) on the locus of singular curves. The dimension of the torus is equal to the number of singularities and the group action is induced by conformal automorphisms.

We now introduce a way to compactify $\MDgP$ motivated by \cite{loo}. A $P$-labeled nodal Riemann surface $C$ is {\bf semistable} when its irreducible components minus labels and nodes have non-positive Euler characteristic. Denote by $\hat{C} = \sqcup_{i \in I} C_i$ its normalization where the $C_i$'s are connected and irreducible. The preimages of singularities under the attaching map are called {\bf nodes}. Let $N$ be the set of nodes and $\iota: N \to N$ the induced involution. Two elements of $N$ are {\bf associated} if they belong to the same orbit of $\iota$. Two components of $\hat{C}$ are associated if one of them has a node associated to a node in the other component. The only smooth $P$-labeled semistable surface that is not stable ($\chi=0$) is the Riemann sphere with two labeled points which will only arise as nodes. We call this surface a {\bf semistable sphere}. Its moduli space is just a point. The only other semistable surface with zero Euler characteristic is the compact torus but since this surface has no labeled points we will not consider this case. Now we further restrict these surfaces as to comply with the following conditions. 

\begin{enumerate}
\item A component cannot be associated to itself.
\item Two semistable spheres cannot be associated.
\item The two points in a semistable sphere are always nodes.
\item A stable component with no labeled points must be associated with at least one other stable component.
\end{enumerate}

\begin{df}
A {\bf perimeter function} for $C$ is a function $\lambda: P \cup N \to [0,1]$ with the following two properties.
\begin{enumerate}
\item If $p$ and $q$ are the nodes of a semistable sphere then $\lambda(p)=\lambda(q)$. \item Every connected component of $\hat{C}$ has at least one point $p \in P \cup N$ with $\lambda(p)>0$.
\end{enumerate}
\end{df}

\begin{df}
An {\bf order} for $C$ is a function $\ord: \pi_0 \hat{C} \to \N$ where $\N = \{ 0,1,2,...\}$ with the condition that if $\ord([C_i])=k>0$ then there exist $j$ such that $\ord([C_j])=k-1$.
\end{df}

A component of order $k$ will be called a {\bf $k$-component}. We will also denote by $\hat{P}_k$ and $\hat{N}_k$ the subsets of $P$ and $N$ lying on $k$-components. 

\begin{df}
We say that the pair $(\lambda, \ord)$ is {\bf compatible} if they satisfy the following property: Let $p \in C_i \cap N$ and $q=\iota(p) \in C_j \cap N$ then $\lambda(p)>0$ if and only if $\lambda(q)=0$. Moreover, in this case we require that $\ord([C_j])<\ord([C_i])$.
\end{df}

\begin{df}
A compatible pair $(\lambda,\ord)$ is {\bf unital} if for each fixed $k$ \[ \sum_{p \in \hat{P}_k \cup \hat{N}_k} \lambda (p) = 1 \] A unital pair will be called a {\bf decoration} of $C$. This definition agrees with the definition of a decoration on smooth Riemann surfaces where there is only one component of order zero.
\end{df}

\begin{figure}
\includegraphics[width=12.5cm]{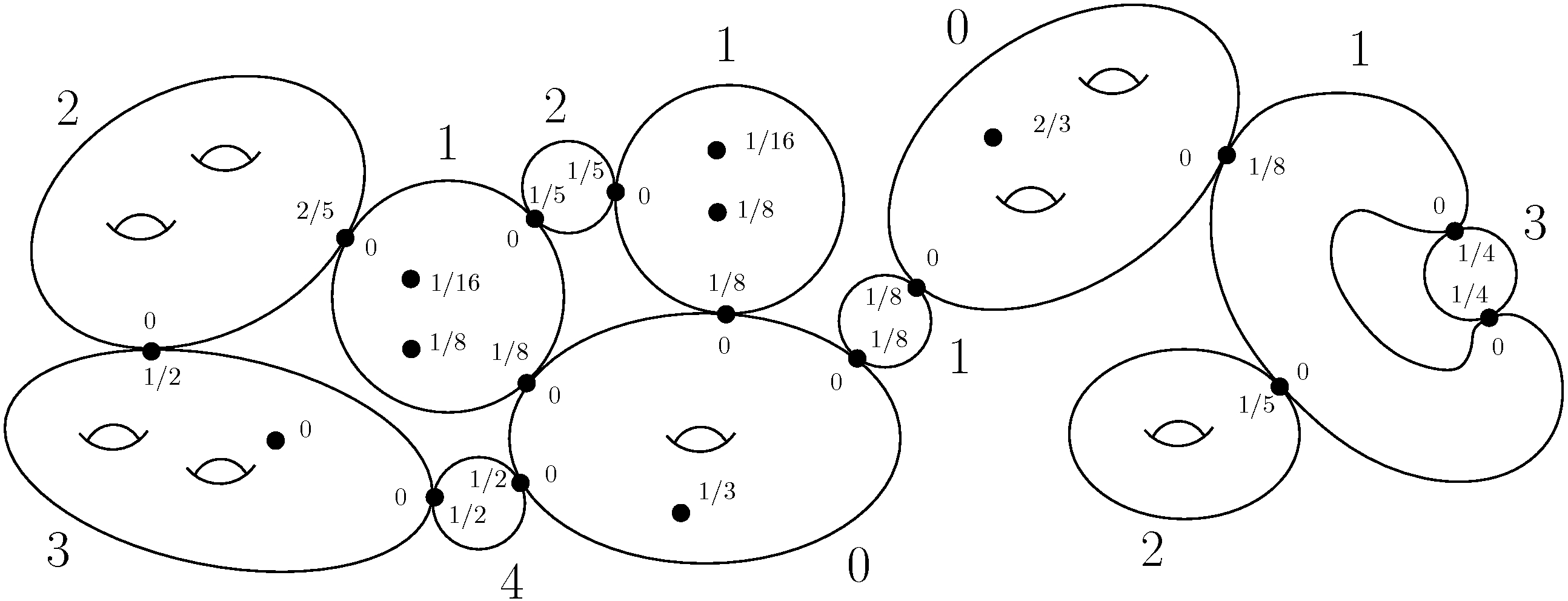}
\caption[Semistable surface with unital pair.]{Semistable surface with unital pair: the numbers inside the surface correspond to decorations by real numbers and the numbers outside the surface correspond to the orders of the irreducible components.} \label{surford}
\end{figure}

\begin{lem} Given a decoration $(\lambda,\ord)$ of $C$ we have:
\begin{enumerate}
\item If $p\in C_i \cap N$ with $\ord([C_i])=0$ then $\lambda(p)=0$.
\item There is a constant $m \in \N$ such that $\ord([C_i])\le m$ for all $i$ and given $k$ such that $0 \le k \le m$ there exist $i$ with $\ord([C_i])=k$.
\item If $p,q \in C_i \cap N$ where $C_i$ is a semistable sphere then $\lambda(p)=\lambda(q)>0$.
\item If $C_i$ is a semistable sphere then any component associated to it has a lower order.
\item A component cannot be associated to another component of the same order.
\end{enumerate}
\end{lem}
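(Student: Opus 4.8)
The plan is to derive each of the five assertions directly from the definitions of a decoration as a compatible, unital pair, together with the four semistability conditions, reasoning about the structure of the normalisation graph of $C$.

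First I would prove (1). Suppose $p \in C_i \cap N$ with $\ord([C_i])=0$, and let $q = \iota(p) \in C_j \cap N$. Since $C_i$ cannot be associated to itself (condition (1) of the semistability list), $i \neq j$. If $\lambda(p)>0$, compatibility forces $\ord([C_j]) < \ord([C_i]) = 0$, which is impossible since $\ord$ takes values in $\N$. Hence $\lambda(p)=0$.

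Next, assertion (2) is about the order function. The defining condition of an order says that whenever $\ord([C_i])=k>0$ there is some $C_j$ with $\ord([C_j]) = k-1$; by downward induction this gives a component of each order $0,1,\dots,k$. Taking $m = \max_i \ord([C_i])$ (finite since $\pi_0\hat C$ is finite) yields the claim; the only thing to check is that the maximum is attained and that no "gaps" occur below it, both immediate from the inductive descent. I expect this to be the easiest of the five.

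For (3), let $C_i$ be a semistable sphere with nodes $p,q$. By condition (3) both $p$ and $q$ are indeed nodes, and the perimeter function satisfies $\lambda(p)=\lambda(q)$ by definition of a perimeter function applied to a strictly semistable component. It remains to see $\lambda(p)>0$: the perimeter function requires every connected component of $\hat C$ to carry some point with positive $\lambda$-value, and $C_i$'s only marked-or-nodal points are $p$ and $q$ with equal $\lambda$-value, so that common value must be positive. For (4), with $C_i$ a semistable sphere and $C_\ell$ associated to it, pick $p \in C_i \cap N$ with $q=\iota(p)\in C_\ell$; by (3), $\lambda(p)>0$, so compatibility gives $\lambda(q)=0$ and $\ord([C_\ell]) < \ord([C_i])$, i.e. $C_\ell$ has strictly lower order. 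Finally (5): suppose $C_i$ and $C_j$ are associated with $\ord([C_i]) = \ord([C_j]) = k$. Choose associated nodes $p \in C_i$, $q = \iota(p) \in C_j$. Compatibility says exactly one of $\lambda(p),\lambda(q)$ is positive — say $\lambda(p)>0$ — and then $\ord([C_j]) < \ord([C_i])$, contradicting equality of orders; the symmetric case is identical, and the case $\lambda(p)=\lambda(q)=0$ is excluded because compatibility demands $\lambda(p)>0 \iff \lambda(q)=0$, which fails when both vanish.

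The main obstacle, such as it is, will be bookkeeping: making sure at each step that I have correctly invoked whether a node lies in $P\cup N$ versus $N$, and keeping the asymmetry in the compatibility condition (the "if and only if" between $\lambda(p)>0$ and $\lambda(q)=0$, plus the order inequality only in that direction) straight when I apply it to an unordered associated pair. None of the steps requires more than unwinding definitions, and the unital condition is in fact not needed for any of (1)--(5); it is only the compatibility and perimeter-function axioms together with the four structural constraints that do the work, so I would flag that explicitly to avoid the reader looking for a deeper argument.
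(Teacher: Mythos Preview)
Your proof is correct and follows essentially the same route as the paper: each item is obtained by unwinding the definitions of perimeter function, order, and compatible pair, with (1), (3), (5) argued by contradiction and (2), (4) by direct inspection. Your version is simply more explicit (in particular you spell out why $\lambda(p)=\lambda(q)=0$ is forbidden in (5), which the paper leaves implicit), and your observation that unitality is never invoked is accurate.
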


\begin{proof}
To show (1) suppose $p \in C_i \cap N$ with $\lambda(p)>0$. Then from the definition of order we get a component $C_j$ with $\ord([C_j])<\ord([C_i])=0$. But this is a contradiction since $\ord([C_i]) \ge 0$. For (2) first notice that $\pi_0 (\hat{C})$ is finite because the surface is compact. Then there exists a maximal order $m$ and the result follows from the definition of order. For (3) assume that $\lambda(p)=0$. Then from the definition of perimeter function we get $\lambda(q)=0$ which is in contradiction with the second condition for a perimeter function. Now (4) follows from (3) by applying the definition of compatible pair. Finally (5) follows solely from the definition of compatible pair.
\end{proof}

\begin{ex}
Figure~\ref{surford} illustrates a unital pair.
\end{ex}

 An {\bf isomorphism} in this context is a stable surface isomorphism preserving the labels pointwise and the decorations.

\begin{df} The set of isomorphism classes of $P$-labeled semistable Riemann surfaces together with a decoration will also be called the {\bf moduli space of decorated semistable surfaces} and will be denoted by $\MOD{g}{P}$.
\end{df}

\begin{df} In an analogous way we introduce the moduli space $\MUD{g}{P}$ by adding decorations by tangent directions at each node of a surface. Isomorphisms are required to preserve this extra data.
\end{df}

\begin{rem}
The local effect of allowing semistable components in the moduli space is minimal. For $\MOD{g}{P}$ it is only adding the combinatorics of the decoration. We will see later that geometrically it accounts for remembering how fast a geodesic vanished. For $\MUD{g}{P}$, given two associated irreducible components, the decorations by tangent directions enlarge the real dimension of that locus by one in the moduli space. Inserting a strictly semistable sphere in between these associated components also adds only one dimension to that locus in the moduli space. This is because the group of automorphisms rotates the sphere so that the real rays corresponding to the nodes on the semistable sphere are irrelevant. This is illustrated on Figure~\ref{sing}.

\begin{figure} 
\includegraphics[width=7cm]{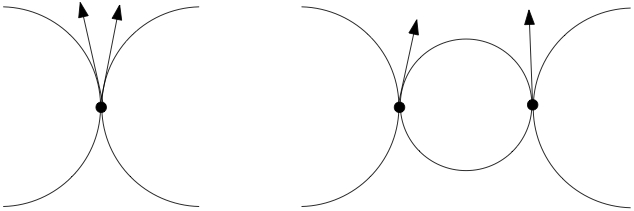}
\caption[Decorations by tangent directions.]{Decorations by tangent directions at a singularity between stable components and semistable spheres.}\label{sing}
\end{figure}
\end{rem}

\subsection{Topology} \label{compactifications} 

We wish to define a bijection $\varphi: \bl_{\bar{F}} (\MU{g}{P} \times \Delta_P) \to \MUD{g}{P}$ where $ \bl_{\bar{F}} (\MU{g}{P} \times \Delta_P)$ is certain blowup construction whose topology we understand. The topology of $\MUD{g}{P}$ is then induced via this map.

If $([C],\lambda) \in \MU{g}{P} \times \Delta_P$ then there is at least one irreducible component of $C$ that has a non-zero decoration. Call these kind of irreducible components {\bf non-zero components}, the rest will be called {\bf zero components}. Consider the different loci of singular surfaces with the following property: every node in a zero component is associated to a node on a non-zero components (notice that this rules out self-intersections on zero components since that component would be associated with itself, which is a zero component by definition). The union of all these loci defines a filtration by dimension we call $F$. The induced filtration by dimension of the closure of the previous loci is denoted by $\bar{F}$. Thus $\bar{F}$ can be viewed as the union of strata intersecting multi-transversely.

\begin{rem}
The highest dimensional strata of $\bar{F}$ correspond with the locus of surfaces with only one singularity and no irreducible component with all of their decorations equal to zero. Since the dimension of these strata is equal to the dimension of the boundary it has the same homeomorphism type and thus blowing up along these strata will not produce any new points on the boundary.
\end{rem}

\begin{thm}
There is a bijection \[ \varphi: \bl_{\bar{F}} (\MU{g}{P} \times \Delta_P) \to \MUD{g}{P}\] that induces a topology on $\MUD{g}{P}$.
\end{thm}

\begin{proof}
The map $\varphi$ is defined as the identity on $\MgP \times \Delta_P$. If $x$ belongs to the locus of singular surfaces of $\bl_{\bar{F}} (\MU{g}{P} \times \Delta_P)$ we need to define $\varphi(x)= [(C,\lambda,\ord)]$. Since $x$ is in the boundary it means that it was a point resulting from blowing up along $(\MU{g}{P} - \MgP ) \times \Delta_P$ and thus it determines a class $[C] \in \MU{g}{P}$. Consider a metric on $\MU{g}{P}$ induced by Fenchel-Nielsen coordinates. The normal bundle can be used to induce a small tubular neighborhood of the boundary of $\bl_{\bar{F}} (\MU{g}{P} \times \Delta_P)$ so that each normal ray corresponds with a geodesic ray isometric to $[0,\rho)$. Taking $0<\epsilon<\rho$ this defines a family $\{[(C_\epsilon,\lambda_\epsilon)]\}_\epsilon$ of decorated (non-singular) surfaces in $\MgP \times \Delta_P$ with $\lim_{\epsilon \to 0} [C_\epsilon] = [C]$.

Now define a unital pair $(\lambda, \ord)$ by induction on the order. Recall that $\{ C_i \}_{i \in I}$ is the set of connected components of the normalization $\hat{C}$. This comes with an involution $\iota: N \to N$ and set $P_i = C_i \cap P$, $N_i = C_i \cap N$. Let $\{ C_i \}_{i \in I_0 \subset I}$ be the set of all irreducible components with $P_i \ne \varnothing$ and $\lim_{\epsilon \to 0} \lambda_\epsilon (p) >0$ for at least one $p \in P_i$. Set $\lambda(p) = \lim_{\epsilon \to 0} \lambda_\epsilon (p)$ for $p \in P_i$ and $\lambda(p) = 0$ for $p \in N_i$. Also let $\ord | \pi_0 (\amalg  C_i) \equiv 0$ for $i \in I_0$. This defines $(\lambda,\ord)$ at order zero. Notice that $\sum \lambda(p) = 1$ where the sum runs over all labeled points on components of order zero.

Lets assume now that $(\lambda,\ord)$ has been defined up to order $k$ and it is unital up to that order. We require a condition on degenerating geodesics. Either a few geodesics have been turned into semistable spheres (by being collapsed and becoming components of higher order) or they have given rise to decorations on nodes of components of order greater than zero (this will be made explicit on the inductive step). In the first case it is assumed that such semistable spheres are associated to components of order less than or equal to $k$. If $g^\epsilon_\beta$ is a geodesic being collapsed to a node $n$ we can express this as the limit $\lim_{\epsilon \to 0} g^\epsilon_\beta =n$. Let $l(g^\epsilon_\beta)$ be the length of such geodesic. Obviously $\lim_{\epsilon \to 0} l(g^\epsilon_\beta) =0$. Consider the limits \[ d(p_\alpha) = \lim_{\epsilon \to 0} \frac{\lambda_\epsilon (p_\alpha)}{\sum \lambda_\epsilon(p_\bullet) + \sum l(g^\epsilon_\bullet)}, \quad d(n) = \lim_{\epsilon \to 0} \frac{l (g^\epsilon_\beta)}{\sum \lambda_\epsilon(p_\bullet) + \sum l(g^\epsilon_\bullet)} \] where the first sum of the denominator runs over all labeled points $p_\bullet \in C_i $ with $i \in I - (I_0 \amalg I_1 \amalg \cdots \amalg I_k)$ and the second sum runs over all geodesics being collapsed to a node singularity that have not been turned into semistable spheres or have given rise to decorations on nodes.

Suppose $d(n)>0$ for some node $n$. Assume first that $n$ separates two components whose orders have already been assigned and thus are less than or equal to $k$. In this case we cut the surface along the node and glue a semistable sphere in between. We call this sphere $C_i$. Here $i=|I|+1$ and we have to include this number in the set $I$. If $q_1$, $q_2$ are the two elements of $N_i$ define \[ \lambda (q_1) = \lambda (q_2) = d(n)/2, \quad \lambda (p_1) = \lambda(p_2) =0, \quad \ord(C_i)=k+1 \] where $p_1 = \iota (q_1)$, $p_2 = \iota(q_2)$ are given in the obvious way. Now suppose that $n$ separates two components one of which has already been assigned an order. Let the other one be $C_i$. If $q \in N_i$ and $p=\lambda(q)$ then define \[ \lambda (q) = d(n), \quad \lambda(p) = 0, \quad \ord C_i = k+1. \] For every component $C_i$ with at least one $p \in P_i$ such that $d(p)>0$ define \[ \lambda(p) = d(p) \,\,\,\, \text{for all $p \in P_i$}, \quad \ord C_i = k+1. \] This produces a unital pair $(\lambda,\ord)$ up to order $k+1$. Since the type of the surface is finite this process exhausts all components of the normalization of the surface giving them orders and possibly creating along the way semistable spheres. This completes the definition of $\varphi$.

The map $\varphi$ is surjective on $\MgP \times \Delta_P$ because it is defined as the identity there. Given a point $[(C,\lambda,\ord)]$ where $C$ is a singular surface, it is possible to construct a one parameter family $\{ [C_t] \}_{0<t<1}$ satisfying the following conditions:

\begin{itemize}
\item $[C_t] \in \MgP$ for $0<t<1$
\item Let $k_i$ be the order of the component of the node $n_i$ such that $\lambda(n_i)
>0$. Let $g_i(t)$ be the geodesic giving rise to $n_i$ in $C_t$. If $l$ denotes the length of a geodesic then \[ l(g_i(t)) = t^{k_i} 2 \lambda(n_i) \quad \text{or} \quad l(g_i(t)) = t^{k_i} \lambda(n_i) \] depending on whether $n_i$ belongs to a semistable sphere or not.
\item $\lim_{t \to 0} [C_t] = [C] \in \partial \MU{g}{P}$
\end{itemize}

Suppose that $p_i \in P$ lies on a component of order $k_i$ and define by $p_i(t)$ its corresponding labeled point in $[C_t]$ for $0<t<1$. Then letting \[ \lambda(p_i(t)) = t^{k_i} \lambda (p_i) \] defines a path $\alpha(t)$ in $\MDgP$. It can be checked then that \[ \lim_{t \to 0} \alpha(t) = [(C,\lambda, \ord)].\] Moreover, the preimage of $\alpha$ under $\varphi$ also defines a path in $\bl_{\bar{F}} (\MU{g}{P} \times \Delta_P)$ (it is the same path since this map is the identity on the interior of the moduli space). This limit also exists and defines a point $x=\lim_{t \to 0} \alpha(t)$ in $\bl_{\bar{F}} (\MU{g}{P} \times \Delta_P)$ which is a preimage of $[(C,\lambda,\ord)]$ under $\varphi$.

The map $\varphi$ is injective on $\MgP \times \Delta_P$ because it is defined as the identity there. Let $x_1$, $x_2$ belong to $\bl_{\bar{F}} (\MU{g}{P} \times \Delta_P) - \MgP \times \Delta_P$ so that $x_1 \ne x_2$ and let $\varphi(x_1)=[(C_1,\lambda_1, \ord_1)]$, $\varphi(x_2)=[(C_2,\lambda_2, \ord_2)]$. Now consider the following cases. If $x_1$ and $x_2$ were generated by blowing up along the locus of singular surfaces with $[C_1] \ne [C_2]$ then $\varphi(x_1) \ne \varphi(x_2)$. In case $[C_1] = [C_2]$ it could also happen that $x_1$ and $x_2$ where generated by blowing up along different strata of $\bar{F}$. This will give rise to different order functions and hence again $\varphi(x_1) \ne \varphi(x_2)$. In the last case, if $[C_1] = [C_2]$ and $\ord_1 = \ord_2$ it can be showed that all the parameters left to consider in the decoration (the perimeter function) completely parametrizes this part of the blowup and therefore $x_1 \ne x_2$ implies $\varphi(x_1) \ne \varphi(x_2)$.

Finally, the topology of $\MUD{g}{P}$ is induced from the topology of the blowup through this bijection. In $\MgP \times \Delta_P \subset \MUD{g}{P}$ it is the same topology as usual since the map is defined as the identity there.

\end{proof}

Notice that by definition $\bl_{\bar{F}} (\MU{g}{P} \times \Delta_P)$ and $\MUD{g}{P}$ are homeomorphic.

\begin{coro} There is an homeomorphism
\[ \MUD{g}{P} \cong \MU{g}{P} \times \Delta_P  \] and therefore $\MUD{g}{P}$ is Hausdorff and compact. \label{maincoro1}
\end{coro}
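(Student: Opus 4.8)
The plan is to read off the corollary from the Theorem just proved together with the elementary blowup lemmas of Section~2. By the Theorem, $\ul{\M}^{dec}_N \cong \ul{\M}^{dec}_B$, i.e. the space $\MUD{g}{P}$ equipped with the topology $\nu$ is homeomorphic to $\bl_{\bar{F}}(\MU{g}{P} \times \Delta_P)$. So it suffices to show that this iterated blowup does not change the homeomorphism type, $\bl_{\bar{F}}(\MU{g}{P} \times \Delta_P) \cong \MU{g}{P} \times \Delta_P$, and then to observe that the right-hand side is Hausdorff and compact, which transfers the same properties to $\MUD{g}{P}$.

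The key observation is that every stratum occurring in the filtration $\bar{F}$ lies in the boundary of $\MU{g}{P} \times \Delta_P$. Indeed, $\MU{g}{P}$ is obtained from $\MO{g}{P}$ by a real blowup along the locus of singular curves, so that locus becomes precisely $\partial \MU{g}{P}$; hence any locus of singular surfaces inside $\MU{g}{P} \times \Delta_P$ is contained in $\partial\MU{g}{P} \times \Delta_P \subset \partial(\MU{g}{P} \times \Delta_P)$, and in particular the open piece $\MgP \times \Delta_P$ is disjoint from $\bar{F}$. I would then apply Lemma~\ref{busb} repeatedly along the filtration: the elements of $\bar{F}$ are blown up in order of increasing dimension, and at each stage one is blowing up a PL-submanifold (or a multi-transverse union of such) sitting inside the boundary of the manifold-with-corners produced by the previous stages, so Lemma~\ref{busb} says the homeomorphism type is unchanged. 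One must check that the strict transform of a higher-dimensional stratum again lies in the boundary after a lower stratum has been blown up; this holds because blowing up a boundary submanifold only enlarges the boundary by a half-disk bundle into which the strict transforms extend, and the multi-transversality of $\bar{F}$ with $\partial(\MU{g}{P} \times \Delta_P)$ is preserved. (The remark preceding the definition of $\ul{\M}^{dec}_B$ already records the extreme instance: the top strata of $\bar{F}$ have the dimension of the boundary, so blowing them up produces no new points.) Iterating yields $\bl_{\bar{F}}(\MU{g}{P} \times \Delta_P) \cong \MU{g}{P} \times \Delta_P$. Finally, $\MO{g}{P}$ is compact Hausdorff and the projection $\MU{g}{P} \to \MO{g}{P}$ is proper with compact fibres, so $\MU{g}{P}$, and hence $\MU{g}{P} \times \Delta_P$, is compact Hausdorff.

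The main obstacle I anticipate is the bookkeeping in the iterated blowup: carefully verifying that the transversality of the strata of $\bar{F}$ with the (cornered) boundary of $\MU{g}{P} \times \Delta_P$ persists through each blowup, so that Lemma~\ref{busb} legitimately applies at every stage, and that the sequential, dimension-indexed blowup is well defined in this orbifold-with-corners setting. Everything else is formal once the Theorem is in hand.
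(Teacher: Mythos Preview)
Your proposal is correct and follows essentially the same route as the paper: invoke the preceding Theorem to identify $\MUD{g}{P}$ with $\bl_{\bar{F}}(\MU{g}{P}\times\Delta_P)$, then use Lemma~\ref{busb} (blowing up inside the boundary preserves the homeomorphism type) to strip the blowup away. The paper's proof is a single sentence that cites Lemma~\ref{busb}; you have simply unpacked the implicit steps --- that the strata of $\bar{F}$ live in $\partial\MU{g}{P}\times\Delta_P$, that the lemma applies iteratively along the filtration, and that compactness and Hausdorffness of the product are standard --- which is exactly the intended argument.
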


\begin{proof}
Since $\bl_{\bar{F}} (\MU{g}{P} \times \Delta_P) \cong \MU{g}{P} \times \Delta_P$, Lemma~\ref{busb} provides such homeomorphism.
\end{proof}

Now we turn our attention to another moduli space.

\begin{df}
The space $\MOD{g}{P}$ is obtained from $\MUD{g}{P}$ by forgetting the decorations by tangent directions. The canonical projection $\MUD{g}{P} \to \MOD{g}{P}$ induces then a quotient topology on $\MOD{g}{P}$.
\end{df}

As a result of the previous theorem the space $\MOD{g}{P}$ can be defined in two analogue ways. One can define decorated semi-stable surfaces together with a topology as before or one can blow up $\MO{g}{P} \times \Delta_P$. The second definition requires an extra step: to forget the decorations by tangent directions produced by the blowup. The topology is then the quotient topology induced by a similar projection as in the previous definition. 

\begin{thm}
There is a map $\MOD{g}{P} \to \MO{g}{P} \times \Delta_P$ which is a homeomorphism in the interior and $\MOD{g}{P}$ has conical singularities along the locus of singular surfaces. \label{mainthm1}
\end{thm}

\begin{proof}
From the previous definition $\MOD{g}{P} \cong \hat{B}$ where the space $\hat{B}$ is obtained from $\bl_{\bar{F}} (\MO{g}{P} \times \Delta_P)$ by forgetting the decorations by tangent directions and thus inherits the quotient topology. This real oriented blowup provides such map which is a homeomorphism on the interior by construction. For the second part consider two cases. If a stratum of $\bar{F}$ corresponds with singular surfaces where all components with labeled points are of order zero then the new boundary created and the subsequent quotient corresponds locally with the picture in Corollary~\ref{torichood} modulo some finite group action. Otherwise there are components of order greater than zero with labeled points having all decorations equal to zero. The former case then generalizes to take care of the latter and a neighborhood of a point in the blowup after taking the quotient will have a toric neighborhood.
\end{proof}

\begin{coro}
The space $\MOD{g}{P}$ is Hausdorff, compact and homotopic to $\MO{g}{P}$. \label{maincoro2}
\end{coro}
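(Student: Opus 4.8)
The plan is to extract everything from the presentation
$\MOD{g}{P}=\bl_{\bar F}(\MO{g}{P}\times\Delta_P)/T$ used in the proof of the preceding theorem, together with the fact that $\MO{g}{P}$ is compact Hausdorff (Deligne--Mumford) and that $\Delta_P$ is a compact, contractible polytope.

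\textbf{Compactness and Hausdorffness.} First I would observe that $\MO{g}{P}\times\Delta_P$ is compact and Hausdorff. By the description of Section~2, the sequential real blowup $\bl_{\bar F}$ is homeomorphic to the result of carving out open tubular neighbourhoods of the strata of $\bar F$, one after another; since carving out an open set leaves a closed subset, $\bl_{\bar F}(\MO{g}{P}\times\Delta_P)$ is a closed subset of a compact Hausdorff space, hence compact Hausdorff. Finally $T$ is a compact (torus) group, and the orbit space of a compact Hausdorff space under a compact group action is again compact Hausdorff: it is a continuous image of a compact space, the orbit map is open, and the orbit relation is the image of the compact set $T\times(\text{the space})$ under $(t,x)\mapsto(x,tx)$, hence closed, so the quotient is Hausdorff. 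This gives the first two assertions.

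\textbf{Homotopy type.} Since $\Delta_P$ is contractible, the projection $\MO{g}{P}\times\Delta_P\to\MO{g}{P}$ is a homotopy equivalence, so it suffices to show $\MOD{g}{P}\simeq\MO{g}{P}\times\Delta_P$. I would work with the blowdown map $q:\MOD{g}{P}\to\MO{g}{P}\times\Delta_P$ obtained by composing $\bl_{\bar F}(\MO{g}{P}\times\Delta_P)\to\MO{g}{P}\times\Delta_P$ with the quotient by $T$; this descends because, as in the local model of Lemma~\ref{sphereovertorus} and the lemma preceding Corollary~\ref{torichood}, $T$ acts only on the exceptional locus created by the blowup (the decorations by tangent directions at the nodes) and trivially elsewhere. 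Thus $q$ is a homeomorphism away from $\bar F$, and over a point of a stratum of $\bar F$ whose normal directions have real codimension $2m$, the fibre of $\bl_{\bar F}$ is $\S^{2m-1}$, which after dividing by $\TO^m$ becomes $\S^{2m-1}/\TO^m\cong\Delta^{m-1}$ by Lemma~\ref{sphereovertorus}; in particular every fibre of $q$ is contractible. Moreover, by the preceding theorem and Corollary~\ref{torichood}, a neighbourhood of such a point in $\MOD{g}{P}$ is modelled on $\R^{k}\times\bigl(\bl_{D_m}(\C^n)/\TO^m\bigr)$, the corresponding neighbourhood downstairs is $\R^{k}\times\C^n$, and in these coordinates $q$ is the blowdown, which is a homotopy equivalence by Corollary~\ref{torichood} (its proof realizes the equivalence precisely by collapsing the inserted simplex).

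\textbf{Globalisation.} To turn these local statements into a global homotopy equivalence I would induct over the strata of $\bar F$ ordered by dimension, from lowest to highest, mimicking the argument that $\bl_{\{\v 0\}}(\C^n)/\TO^n\simeq\C^n$. At the stage corresponding to one stratum, the subspace newly introduced in $\MOD{g}{P}$ is the image in the quotient of the exceptional divisor over that stratum, namely a bundle of simplices $\Delta^{m-1}$, which is fibrewise contractible and carries a mapping-cylinder (collar) neighbourhood coming from the ray direction of the blowup; hence the relevant pair has the homotopy extension property, and retracting the simplex fibres does not change the homotopy type. Assembling these retractions along the filtration deforms $\MOD{g}{P}$ onto a space homeomorphic to $\MO{g}{P}\times\Delta_P$, which combined with the first paragraph proves the corollary. (Alternatively, $q$ is a proper surjection with contractible fibres between compact, finite-dimensional orbifolds, hence a cell-like map, hence a homotopy equivalence; this gives the conclusion in one step if one is willing to quote that theorem.)

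\textbf{Main obstacle.} The essentially formal part is the compactness and Hausdorffness; the real work is the globalisation of the third paragraph. Since $\bar F$ is a union of orbifold strata meeting multi-transversely and the blowup is performed sequentially, one must check that the collars (equivalently, the fibrewise retractions of the simplex bundles) over different strata can be chosen compatibly, so that the stage-by-stage deformations compose to a genuine deformation retraction of $\MOD{g}{P}$. Carrying this out carefully along the whole filtration — or invoking the cell-like map criterion to bypass it — is where the attention must go.
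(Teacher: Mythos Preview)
Your approach is essentially the one the paper intends: the corollary is stated without proof and is meant to follow from the local toric picture of the preceding theorem together with Corollary~\ref{torichood}, plus the compactness of $\MO{g}{P}\times\Delta_P$. Your write-up is in fact more detailed than the paper's, and your identification of the globalisation step as the only substantive point is accurate.

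One imprecision worth correcting: you treat $T$ as a single compact torus acting globally on $\bl_{\bar F}(\MO{g}{P}\times\Delta_P)$, but the torus dimension varies stratum by stratum (it equals the number of nodes), so there is no such global action. The paper itself calls it a ``local action of the torus''. Your Hausdorff argument therefore does not apply verbatim. The cleanest fix is to route through Corollary~\ref{maincoro1}: $\MUD{g}{P}\cong\MU{g}{P}\times\Delta_P$ is compact Hausdorff, and $\MOD{g}{P}$ is by definition its quotient by forgetting tangent directions; the fibres are compact (finite quotients of tori) and the equivalence relation is the preimage of the diagonal under the continuous map to $\MO{g}{P}\times\Delta_P\times(\text{order data})$, hence closed, so the quotient is compact Hausdorff. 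With that adjustment the argument is complete.
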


The following lemma will help us understand the examples.

\begin{lem}
The preimages of points in the strata of $\bar{F}$ in $\bl_{\bar{F}} (\MU{g}{P} \times \Delta_P)$ under the natural projection are products of simplices modulo finite groups.
\end{lem}

\begin{proof}
By definition of $\bar{F}$ a point in the filtration lies in the locus of multi-intersecting strata. This gives the topological type, decoration by tangent directions, and conformal structure on the irreducible components of the normalization. The extra information introduced by the blowup is the half sphere as in the proof of Lemma~\ref{busb}. If a metric is given this induces a metric on the normal bundle. This metric then can be used to give the half sphere the desired parametrization by a product of blown-up simplices, one for every order of the surface, modulo a finite group action.
\end{proof}

\begin{figure}
\includegraphics[width=9cm]{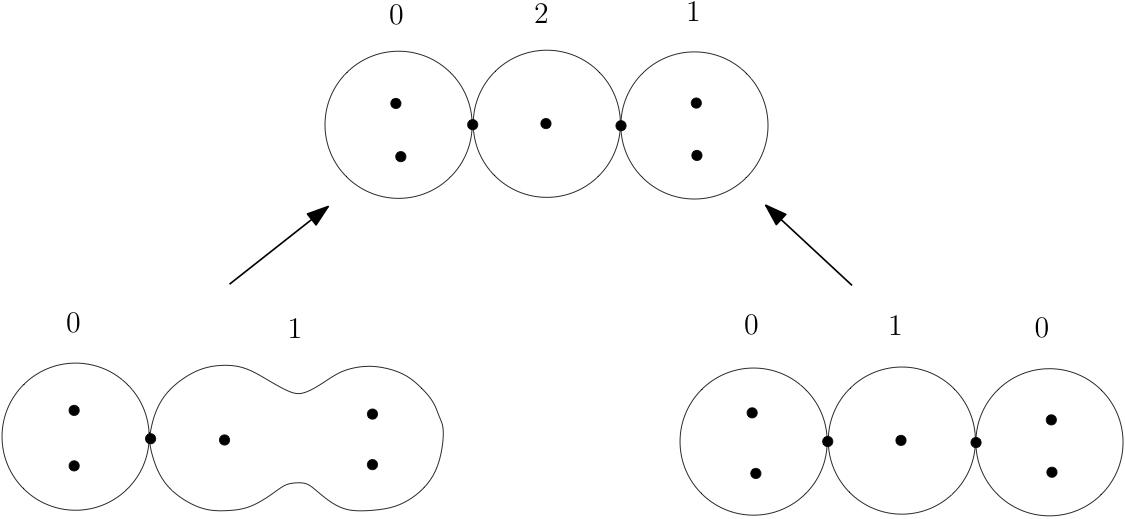}
\caption{The topological type of a surface arising as the intersection of strata (notice how the orders are added).} \label{surftrio}
\end{figure}

\begin{ex}
On Figure~\ref{surftrio} we can see how the topological type and combinatorics of the order can be determined by the intersection of the strata being blown up in the definition of the decorated moduli space. The actual decorations of the surface in the middle is determined by a point in $\Delta^1 \times \Delta^2 \times \Delta^1$ corresponding with the components of order 0, 2, and 1 respectively. Figure~\ref{busas} shows surfaces whose associated closure of blown-up simplex parameterizing the decorations is given on Figure~\ref{bu}. The extra faces induced by the blowup (and captured by the closure) correspond with decorations on components of higher order.
\end{ex}

\begin{figure}
\includegraphics[width=7cm]{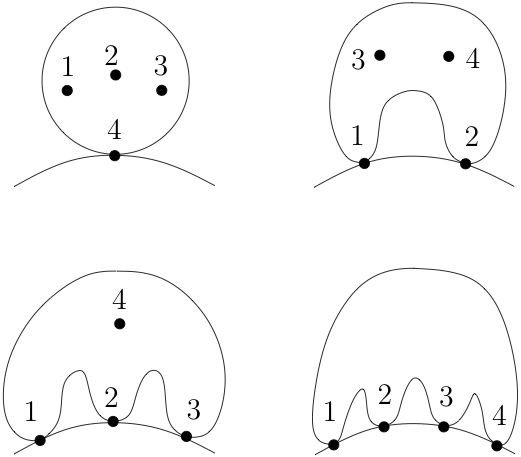}
\caption{Surfaces associated with the blown-up simplices on Figure~\ref{bu}.} \label{busas}
\end{figure}

\begin{rem}
By generalizing Figure~\ref{bu} and Figure~\ref{busas} one can obtain the cyclohedron from degeneration of surfaces. This is not the case with the associahedron. In the case of ${\mathcal K}^3$ this is because all possible three dimensional simplices arising from degeneration of surfaces are illustrated in those figures and ${\mathcal K}^3$ is not among them. This also implies that it will not show up as a face in a higher dimensional blown-up simplex. To get the associahedron one needs to consider compactified versions of moduli of Riemann surfaces with boundary as in \cite{liu} or \cite{cos:dpv}. More recently in \cite{dhv} we can find a nice treatment of this connection between bordered Riemann surfaces and associahedral polytopes. 
\end{rem}

\begin{ex}
The space $\MU{0}{P}$ where $|P|=4$ can be identified with the Riemann sphere with three removed open disks corresponding with the three possible ways in which the Riemann sphere with four labeled points can degenerate. The space $\MUD{0}{P}$ is the union of $\M_{0,P} \times \Delta_P$ with three copies of the space $\S^1 \times T$ where $T$ is a three dimensional simplicial complex obtained from gluing three solids: two copies of $\Delta^2 \times \Delta^1$ and the real oriented blowup of $\Delta^3$ at two opposite edges corresponding to the decorations of the labeled points in each irreducible component of the stable surface. The interior of the blown-up copy of $\Delta^3$ corresponds with the first surface on Figure~\ref{surfcom}. The interior of the two copies of $\Delta^2 \times \Delta^1$ corresponds with the second surface on Figure~\ref{surfcom}. Finally, the intersection of these complexes are rectangles corresponding with the third surface on Figure~\ref{surfcom}.

\begin{figure}
\includegraphics[width=5cm]{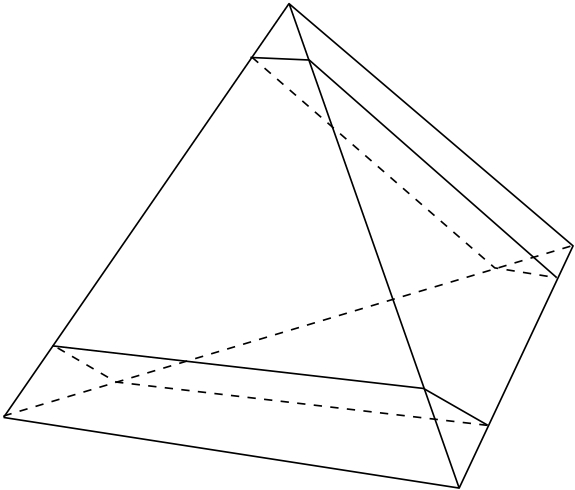}
\caption{The complex $T$.}
\end{figure}

A simple way to go from $\M_{0,P} \times \Delta_P$ to the boundary is to consider the geodesic $g(t)$ that is being collapsed and its length $l$. If $l(g(t)) \to 0$ and each resulting irreducible component contains a marked point with non-vanishing limit, then we land in the blown-up copy of $\Delta^3$ in $T$. If the decorations of both labeled points in a resulting irreducible component tend to zero then we land in one of the copies of $\Delta^2 \times \Delta^1$. To decide in which point we land let $d_1(t)$, $d_2(t)$ be such decorations and $n$ the decoration at the node. Then the decorations in the limit will be \begin{align*} 
d_1 &= \lim \frac{d_1(t)}{d_1(t) + d_2 (t) + l(g(t))} \\
d_2 &= \lim \frac{d_2(t)}{d_1(t) + d_2 (t) + l(g(t))} \\
n &= \lim \frac{l(g(t))}{d_1(t) + d_2 (t) + l(g(t))}.
\end{align*}

This works for $\MUD{0}{4}$ as well as $\MOD{0}{4}$. The only difference is that in the first case we keep track of the angles which give the decorations by tangent directions. Since the singular surfaces in $\MOD{0}{4}$ can only have one singularity the toric neighborhoods reduce to the cone over a circle which is homeomorphic to a disc.
\end{ex}

\begin{figure}
\includegraphics[width=12cm]{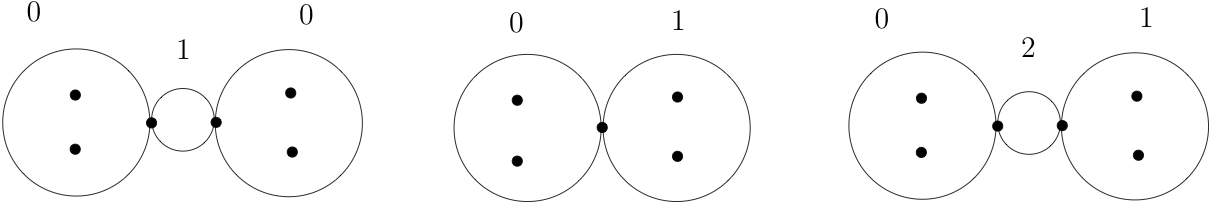}
\caption{Surfaces associated to the complex $T$.} \label{surfcom}
\end{figure}

\section{Semistable Ribbon Graphs}

\subsection{Ribbon Graphs}

By a graph we mean a combinatorial object consisting of vertices, edges that split into half-edges and incidence relations. We avoid isolated vertices. This is the same as a one dimensional CW-complex up to cellular homeomorphism.

We will need to consider a special graph with only one edge and no vertices homeomorphic to $\S^1$. We call this a {\bf semistable circle}. The following definition of ribbon graph allows then for the possibility of having multiple connected components, some of them possibly being semistable circles.

\begin{df}
A {\bf ribbon graph} $\Gamma$ is a finite graph together with a cyclic ordering on each set of adjacent half-edges to every vertex. \par\medskip
\end{df}

If $H$ is the set of half-edges and $v$ is a vertex of $\Gamma$ let $H_v$ be the set of adjacent half-edges to this vertex. The {\bf valence} of a vertex is then $|H_v|$. A {\bf trivalent} graph is one for which all vertices have valence three. A cyclic ordering at a vertex $v$ is an ordering of $H_v$ up to cyclic permutation. Once a cyclic ordering of $H_v$ is chosen, a cyclic permutation of $H_v$ is defined (an element of $\FS_{H_v}$): it moves a half-edge to the next in the cyclic order. Define by $\sigma_0$ the element of $\FS_H$ which is the product of all the cyclic permutations at every vertex and let $\sigma_1$ be the involution in $\FS_H$ that interchanges the two half-edges on each edge of $\Gamma$. Notice that if $\sigma_0$ does not act on certain half-edges it is because those half-edges belong to semistable circles (semistable circles have no vertices). This combinatorial data completely defines the ribbon graph. 

To be more precise, given a finite set $H$ and permutations $\sigma_0, \sigma_1 \in \FS_H$ such that $\sigma_0$ is a product of cyclic permutations with disjoint support and $\sigma_1$ is an involution without fixed points, then we can construct a ribbon graph $\Gamma$. A vertex of $\Gamma$ is then given as an orbit of $\sigma_0$ on $H$, while an edge is then an orbit of $\sigma_1$ on $H$. The set of vertices may be identified with $V(\Gamma)=H/\sigma_0$ and the set of edges with $E(\Gamma)=H/\sigma_1$. Semistable circles correspond with pairs of half-edges in the orbit of $\sigma_1$ that are missed by the action of $\sigma_0$. 

Let $\sigma_\infty = \sigma_0^{-1} \sigma_1$. The orbits of $\sigma_\infty$ will be called {\bf cusps} and they form the set $C(\Gamma)= H/\sigma_\infty$. The half-edges in the orbit of a cusp forms a cyclically ordered set of half-edges called a {\bf boundary cycle}. The obvious graph associated to the boundary cycle is called a {\bf boundary subgraph}. The reason for such terms will become evident later. For a semistable circle we let $\sigma_\infty$ be the identity. This implies that semistable circles have exactly two boundary cycles (each one consisting of only one half-edge). The cusps and the vertices of valence one or two will be called {\bf distinguished points}. Notice also that knowing $\sigma_1$ and $\sigma_\infty$ completely determines the ribbon graph structure since $\sigma_0= \sigma_1 \sigma_\infty^{-1}$.

A {\bf loop} is an edge incident to only one vertex and a {\bf tree} is a connected graph $T$ satisfying $\bar{H}_*(T)=0$. 

An {\bf isomorphism} of ribbon graphs is a graph isomorphism preserving the cyclic orders on each vertex. Therefore, two graphs $\Gamma$, $\Gamma'$ are isomorphic when there is a bijection $\eta: H \to H'$ between the set of half-edges of these two graphs that commutes with $\sigma_0$, $\sigma_0'$ and $\sigma_1$, $\sigma_1'$. In particular this implies that the boundary cycles are preserved, \emph{i.e.} $\eta$ also commutes with $\sigma_\infty$, $\sigma_\infty'$.  If we restrict to automorphisms of a graph it is clear that this will generate a group with this definition. The group of automorphisms of the semistable circle is $\Z/ 2 \Z$.

\begin{ex}
Consider the ribbon graph in Figure~\ref{theta}. Denote by $h_i$ the half-edges of the graph as in the Figure and let the cyclic ordering be induced by the counter-clockwise orientation. Then \begin{align*}
\sigma_0 &= (h_1 h_5 h_3)(h_2 h_6 h_4) \\
\sigma_1 &= (h_1 h_2)(h_3 h_4)(h_5 h_6) \\
\sigma_\infty &= (h_1 h_4 h_5 h_2 h_3 h_6)
\end{align*}

Its group of automorphisms is $\Z/2\Z \times \Z/3\Z$ where the $\Z/ 2 \Z$ factor is induced by $\sigma_1$.

\begin{figure}
\includegraphics[width=3.8cm]{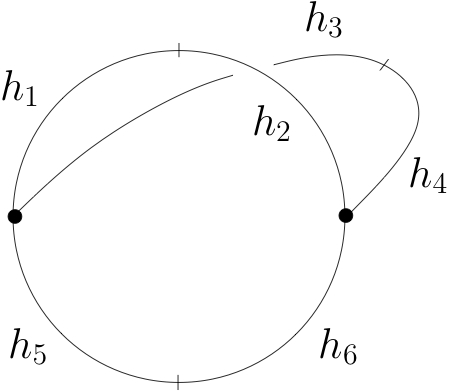}
\caption{A ribbon graph with both vertices having the counter-clockwise orientation.} \label{theta}
\end{figure}

\end{ex}

An interesting construction associated to a ribbon graph is its {\bf dual graph}; it is obtained by passing from $(H_\Gamma; \sigma_0,\sigma_1)$ to $(H_\Gamma; \sigma_\infty,\sigma_1)$. This new ribbon graph will be denoted by $\Gamma^*$. Notice that there is a natural identification between the sets $E(\Gamma)$ and $E(\Gamma^*)$. The dual graph of the semistable circle is itself.

From now on all figures of ribbon graphs will have the cyclic ordering induced by the counter-clockwise orientation.

\begin{rem}
The set of half-edges can be identified with the set of oriented edges in two ways. To each oriented edge we can assign the source or target half-edge. We use the one assigning the source. The involution $\sigma_1$ switches the orientation of every edge.
\end{rem}

To every ribbon graph $\Gamma$ we can associate an oriented surface $\surf(\Gamma)$ constructed as follows. To each oriented edge $e$ we can associate a semi-infinite rectangle $K_e=|e|\times \R_{\ge 0}$ at the base where $|e|$ is homeomorphic to the closed unit interval. Let $\ol{K}_e$ be its one-point compactification. Now identify the base of $\ol{K}_e$ with the base of $\ol{K}_{\sigma_1(e)}$ and the right-hand edge of $\ol{K}_e$ with the left-hand edge of $\ol{K}_{\sigma_\infty(e)}$. There are some special points coming from the compactification (after adding them into the surface), they can be identified with the orbits of $\sigma_\infty$, and that's why we call them cusps. Each connected component of the graph has genus $g_i=(2-\chi_i-n_i)/2$ where $\chi_i=|V(\Gamma_i)|-|E(\Gamma_i)|$, $\Gamma_i$ is the $i$-th connected component of $\Gamma$ and $n_i$ is the number of cusps in that component. The surface comes with a natural orientation given by the tiles since they are naturally oriented and their orientations match each other because of the way we glued them.

This construction can also be applied to semistable circles. Even though semi-stable circles have no vertices they still have half-edges and thus they also have two orientations corresponding to their boundary cycles. We glue the semi-infinite rectangles in order to obtain an infinite cylinder with two cusps. One may worry that since there is no vertex there is no way to know where to start gluing the rectangle. However, the choice of a base point becomes irrelevant because the moduli of semistable spheres is trivial.

There is also a natural identification between $\surf(\Gamma)$ and $\surf(\Gamma^*)$ where $\Gamma^*$ is the dual graph.

\begin{figure}
$$
\begin{array}{ccc}
\includegraphics[width=3cm]{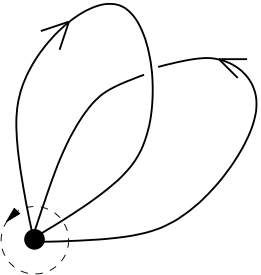} \put(-85,80){$a$} \put(2,63){$b$} & \put(3,33){$\Rightarrow $} \qquad & \includegraphics[width=3cm]{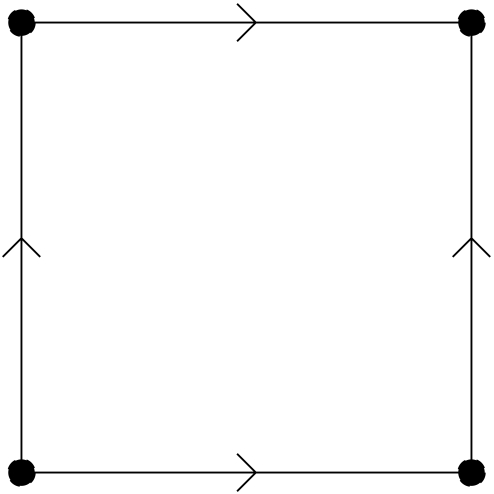} \put(3,40){$b$} \\
 & & \put(0,10){$a$} \\
\text{Ribbon graph} & & \text{Square with no interior}
\end{array}
$$

\bigskip

$$
\begin{array}{ccc}
\includegraphics[width=3.6cm]{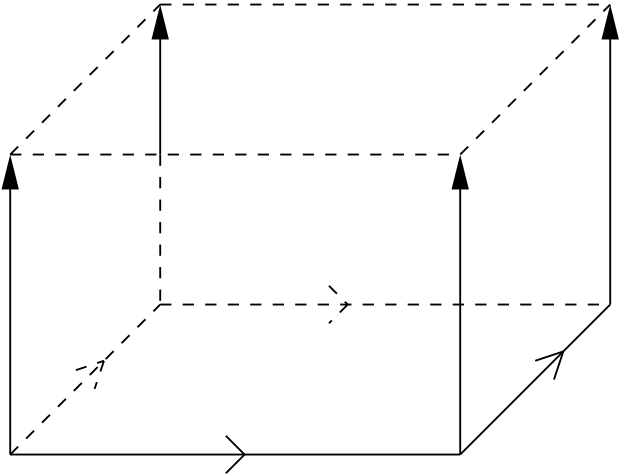} \put(-5,12){$b$} \put(-98,35){$K_{\sigma_1b}$} \put(-20,40){$K_b $} \put(-65,40){$K_a$} \put(-48,65){$K_{\sigma_1a}$} & \put(3,33){$\Rightarrow$} \qquad & \includegraphics[width=3cm]{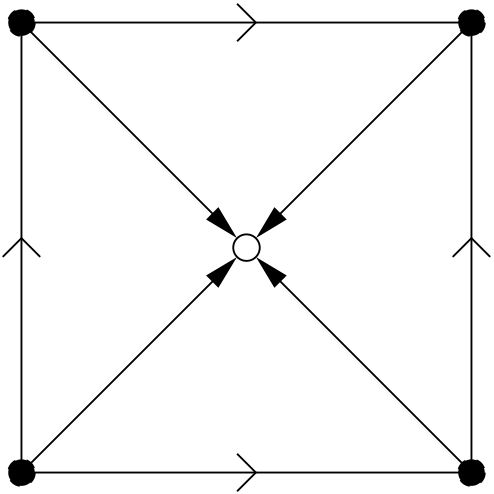} \put(3,40){$b$} \put(-75,40){$K_{\sigma_1b}$} \put(-25,40){$K_b$} \put(-48,15){$K_a$} \put(-51,65){$K_{\sigma_1a}$} \\
 \put(-15,10){$a$} & & \put(0,10){$a$} \\
\text{Box without top and bottom} & & \text{Square without a point}
\end{array}
$$

\bigskip

\includegraphics[width=7cm]{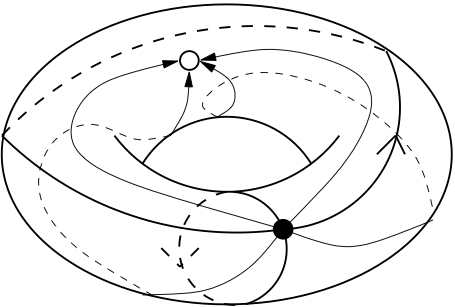}
\caption[Once-punctured torus.]{Once-punctured torus, adding the puncture gives $\surf(\Gamma)$.}
\end{figure}

\begin{df}
A {\bf $P$-labeled ribbon graph} is a ribbon graph together with an injection $x: P \hookrightarrow V(\Gamma) \sqcup C(\Gamma)$ whose image contains all distinguished points. The elements of the image will be called {\bf labeled points}. 
\end{df}

An {\bf isomorphism} of $P$-labeled ribbon graphs is a ribbon graph isomorphism that preserves the labels. In particular, the automorphism group of the semistable circle is trivial.

\begin{df}
The {\bf Euler characteristic} of a $P$-labeled ribbon graph is defined as the Euler characteristic of the graph minus $|P|$. The semistable circle is defined to have Euler characteristic equal to zero.
\end{df}

\begin{rem}
Clearly, if $\Gamma$ is a $P$-labeled ribbon graph then $\surf(\Gamma)$ inherits a $P$-labeling in the form of a function $x: P \hookrightarrow \surf(\Gamma)$. The topological type $(g,|P|)$ of a $P$-labeled ribbon graph refers to the genus $g$ of the generated surface and the number $P$ of labels. It is also easy to check that the Euler characteristic of the ribbon graph is the same as the Euler characteristic of the surface associated to it.
\end{rem}

\subsection{Gluing Construction}

Fix a vertex $v$ in a ribbon graph. We can construct a new ribbon graph by replacing $v$ with $|H_v|$ edges and $|H_v|$ vertices as in Figure~\ref{blow-upv}. The new ribbon graph is the {\bf blowup} of $v$. This operation adds one extra boundary cycle to the ribbon graph.

\begin{figure}
\includegraphics[width=6.5cm]{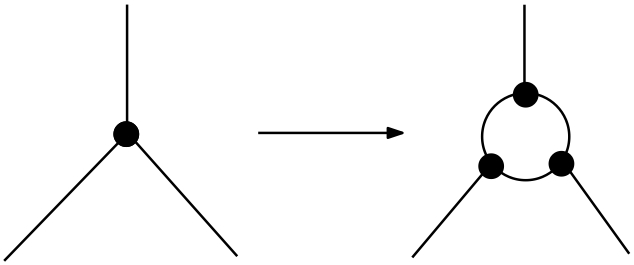}
\caption{How to blow up a vertex.} \label{blow-upv}
\end{figure}

A boundary cycle is called {\bf injective} if any two half-edges in this orbit are not in the same orbit of $\sigma_0$ or $\sigma_1$. This implies that the boundary subgraph is homeomorphic to a circle. For example, the extra boundary cycle generated in the blowup is always injective.

By {\bf disjoint} boundary cycles we mean boundary cycles that do not share any half edges in the same orbit of $\sigma_0$ or $\sigma_1$. This means that the associated boundary subgraphs do not intersect. Given two disjoint boundary cycles with at least one of them being injective we can produce a finite family of ribbon graphs as follows.

Since both boundary cycles correspond with subgraphs that can be identified with CW-complexes themselves choose parametrizations of each subgraph by $\S^1$. The parametrization of the subgraph associated to the injective boundary cycle must be compatible with the natural counter-clockwise orientation of $\S^1 \subset \C$, \emph{i.e.} it follows the cyclic order of the boundary cycle. The other subgraph is parametrized with the opposite orientation. 

\begin{figure}
\includegraphics[width=11cm]{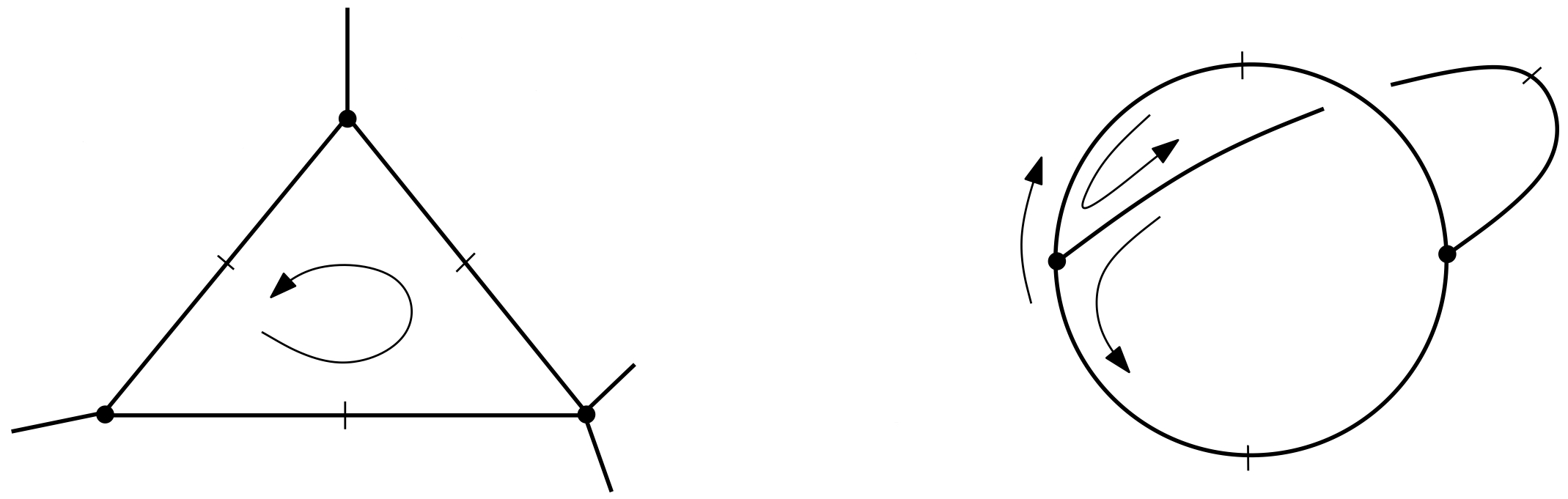}
\caption[Positive and negative boundary cycles.]{Two boundary cycles, the one on the left is injective, the one on the right is not.} \label{firstlast}
\end{figure}

Now we glue both subgraphs via the map identifying two points if their preimages under the parametrization coincides. This gives an obvious new set of half-edges and vertices and it can be shown that the resulting graph is a ribbon graph (this is the reason why we introduced disjointness and injectivity of boundary cycles). Now discard the parametrization left in the gluing. This results then in a ribbon graph called a {\bf gluing}. There is also a way to define this gluing construction in a purely combinatorial way but it lacks the geometrical intuition.

To produce a family of ribbon graphs change the parametrizations and keep only one representative from each isomorphism class of ribbon graph thus created. Since there is a bound on the size of the resulting graphs and these graphs are also finite there will be only a finite number of isomorphism classes. 

\begin{df}
Given a vertex and a boundary cycle whose associated graph does not include the given vertex we define a {\bf gluing} by applying the gluing construction to the blowup of the vertex and the given boundary cycle. \label{gluing}
\end{df}

This construction is well defined because the blowup is injective and the given condition implies that the boundary cycles are disjoint.

The previous definition is a sort of ``desingularization" of graphs.

\begin{figure}
\includegraphics[width=12.5cm]{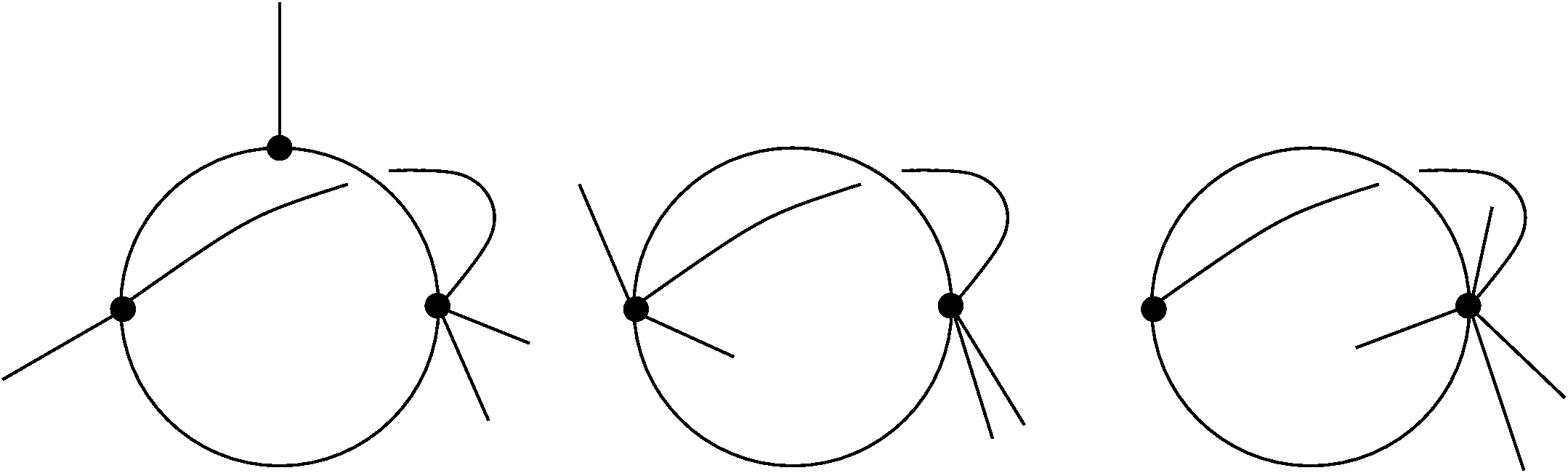}
\caption[Different gluings]{Different gluings of two boundary cycles from Figure~\ref{firstlast}.}
\end{figure}

\subsection{Semistable Ribbon Graphs}

Let us describe two ribbon graphs we can obtain from a proper subset of edges $Z \subset E(\Gamma)$. One will be associated to $Z$ and the other to its complement in $E(\Gamma)$. Denote by $\Gamma_Z$ the subgraph with set of edges $Z$ and $H_Z$ its set of half-edges. The ribbon graph structure is induced by $\sigma_0$ and $\sigma_1$ in the following way. The new $\sigma^{\Gamma_Z}_1$ is just the restriction while $\sigma^{\Gamma_Z}_0$ is defined by declaring $\sigma^{\Gamma_Z}_0(h)$, with $h \in H_Z$, to be the first term in the sequence $(\sigma_0^k(h))_{k>0}$ that is in $H_Z$.

The proper subset $Z \subset E(\Gamma)$ of edges of a ribbon graph induces a ribbon graph structure on the graph determined by the complement of $Z$ in $E(\Gamma)$. We will denote this graph by $\Gamma / \Gamma_Z$. The new graph has set of edges $E(\Gamma) - Z$ with induced set of half-edges $H_{\Gamma/\Gamma_Z}$. Since $\sigma_1$ and $\sigma_\infty$ completely determine the ribbon graph structure it is enough to define them in $H_{\Gamma/\Gamma_Z}$. The new involution is just the restriction $\sigma^{\Gamma/\Gamma_Z}_1= \sigma_1 |_{H_{\Gamma/\Gamma_Z}}$. Given $h \in H_{\Gamma/\Gamma_Z}$ we define $\sigma^{\Gamma/\Gamma_Z}_\infty (h)$ to be the first term of the sequence $(\sigma_\infty^k(h))_{k>0}$ that is in $H_{\Gamma/\Gamma_Z}$.

\begin{rem}
If $\Gamma_Z$ is simply connected then $\Gamma / \Gamma_Z$ is topologically the result of collapsing each component of $\Gamma_Z$ to a point. In general this is not a topological quotient. Figure~\ref{nottopquo} shows an example of this last case. It turns out that this definition allows us to track the creation of nodes at the graph level.
\end{rem}

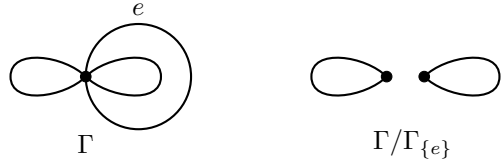
\begin{figure} 
\begin{tikzpicture}
\draw[thick] (0,0) to [out=135, in=90] (-1,0) to [out=270,in=225] (0,0);
\draw[thick, rotate=180] (0,0) to [out=135, in=90] (-1,0) to [out=270,in=225] (0,0);
\draw[thick] (0.7,0) circle (0.7);
\draw[fill=black] (0,0) circle (2pt);
\node at (0.7,0.9) {$e$};
\node at (0,-0.9) {$\Gamma$};

\draw[thick, xshift=4cm] (0,0) to [out=135, in=90] (-1,0) to [out=270,in=225] (0,0);
\draw[fill=black, xshift=4cm] (0,0) circle (2pt);
\draw[thick, rotate=180, xshift=-4.5cm] (0,0) to [out=135, in=90] (-1,0) to [out=270,in=225] (0,0);
\draw[fill=black, xshift=4.5cm] (0,0) circle (2pt);
\node at (4.35,-0.9) {$\Gamma/\Gamma_{\{ e \}}$};
\end{tikzpicture}
\caption[Not a topological quotient]{The original graph has one vertex while the second one has two and it is disconnected.} \label{nottopquo}
\end{figure}

We now describe how to collapse edges in a $P$-labeled ribbon graph without changing the homeomorphism type of $\surf(\Gamma)$ relative to $P$.

\begin{df}
A subset $Z \subset E(\Gamma)$ of a $P$-labeled ribbon graph $\Gamma$ is called {\bf negligible} if each connected component of $\Gamma_Z$ is either a tree with at most one labeled point or a homotopy circle without labeled points that contains a boundary subgraph.
\end{df}

\begin{df}
If $\Gamma$ is a $P$-labeled ribbon graph and $Z \subset E(\Gamma)$ is a negligible subset define the {\bf edge collapse} of $\Gamma$ respect to $Z$ as $\Gamma/Z = \Gamma/\Gamma_Z$ with the induced $P$-labeling. \label{edgecollap}
\end{df}

\begin{rem}
Collapsing a tree with at most one labeled point does not change the injectivity of the labels. Collapsing a homotopy circle without labeled points that contains a boundary subgraph is called a {\bf total collapse} and in this case the label of the corresponding cusp turns into a label of the induced vertex. The injectivity of this labeling is still preserved.
\end{rem}

\begin{lem}
If $Z$ is negligible then $\surf(\Gamma) \cong  \surf(\Gamma/Z)$ relative to $P$.
\end{lem}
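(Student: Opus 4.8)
The plan is to reduce the general statement to two basic cases and handle each by a direct check on the surface $\surf(\Gamma)$. Since negligibility is a condition on each connected component of $\Gamma_Z$ separately, and collapsing disjoint subforests can be done one component at a time, it suffices to prove the lemma when $\Gamma_Z$ is connected and is either (i) a tree with at most one labeled point, or (ii) a homotopy circle without labeled points containing an entire boundary cycle. In either case I would exhibit an explicit homeomorphism $\surf(\Gamma) \to \surf(\Gamma/\Gamma_Z)$ carrying each labeled point of $\Gamma$ to the corresponding labeled point of $\Gamma/\Gamma_Z$; the key is that such a homeomorphism is supported in a neighborhood of $\Gamma_Z$ inside $\surf(\Gamma)$, so gluing it with the identity away from $\Gamma_Z$ is automatic.

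For case (i), I would use the description of $\surf(\Gamma)$ by the rectangles $\ol K_e$ glued along $\sigma_1$ and $\sigma_\infty$. The union of the (compactified) rectangles corresponding to half-edges of $\Gamma_Z$, together with a small collar from the adjacent rectangles, forms a closed regular neighborhood $R$ of $\Gamma_Z$ in $\surf(\Gamma)$. Since $\Gamma_Z$ is a tree, $R$ is a closed disk; since $\Gamma_Z$ carries at most one labeled point and the construction of $\Gamma/\Gamma_Z$ collapses $\Gamma_Z$ to a single vertex (inheriting that label if present), the corresponding neighborhood $R'$ of that vertex in $\surf(\Gamma/\Gamma_Z)$ is again a closed disk with the same boundary identification pattern. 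A homeomorphism $R \to R'$ fixing $\partial R$ pointwise then extends by the identity to the desired homeomorphism of surfaces, and it is relative to $P$ because any labeled point in $R$ is a distinguished point sitting at the collapsed vertex. One must check that $\sigma^{\Gamma/\Gamma_Z}_1$ and $\sigma^{\Gamma/\Gamma_Z}_\infty$, as defined via "first term of $(\sigma_\infty^k(h))_{k>0}$ not in $H_Z$," reproduce exactly the boundary gluings on $\partial R'$ — this is the bookkeeping heart of the argument, but it is routine given the definitions.

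For case (ii) — the total collapse — $\Gamma_Z$ is a homotopy circle whose boundary cycles include an entire boundary cycle $c$ of $\Gamma$, and it has no labeled points, though $c$ itself is a cusp of $\Gamma$ carrying a label. Collapsing $\Gamma_Z$ replaces $c$ by a vertex of $\Gamma/\Gamma_Z$, and by the remark preceding the lemma the label of the cusp $c$ becomes the label of that vertex. Here the relevant neighborhood $R$ of $\Gamma_Z$ in $\surf(\Gamma)$ is an annulus: one boundary circle of $R$ surrounds the cusp point of $c$ (so $R$ minus that cusp point is a half-open annulus), and the other side matches the collar coming from the rest of $\Gamma$. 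After the collapse, $R$ is replaced by a disk $R'$ around the new vertex, which still contains the (now relocated) labeled point. A homeomorphism $R \to R'$ sending the cusp point to the vertex and fixing the outer boundary circle, extended by the identity, finishes the case; one uses that the $\sigma_\infty$-orbit $c$ being entirely inside $H_Z$ is precisely what makes the collapsed object a single vertex with all the cuspidal half-edges absorbed.

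The main obstacle I expect is not the topology — each regular neighborhood is visibly a disk or annulus — but verifying that the combinatorially-defined ribbon structure on $\Gamma/\Gamma_Z$ (via the "first term not in $H_Z$" rules for $\sigma_1$ and $\sigma_\infty$) matches the ribbon structure read off from the collapsed surface, i.e.\ that $\surf(\Gamma/\Gamma_Z)$ computed from the quotient permutations agrees with the quotient of $\surf(\Gamma)$ by collapsing $R$. This amounts to tracing, for each half-edge $h \notin H_Z$, how the sequence $\sigma_\infty^k(h)$ threads through the half-edges of $\Gamma_Z$ and re-emerges; one must show it re-emerges at exactly the half-edge that the geometric collapse dictates. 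Contractibility of $\Gamma_Z$ in case (i) (resp.\ the single-boundary-cycle condition in case (ii)) is what guarantees this threading is unambiguous and that no genus or extra cusp is created, so the Euler-characteristic relation $g_i=(2-\chi_i-n_i)/2$ is preserved; this is the step that genuinely uses the hypothesis and deserves the most care.
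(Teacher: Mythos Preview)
Your proposal is correct and follows essentially the same strategy as the paper: reduce to a single connected component of $\Gamma_Z$, and handle the tree case and the homotopy-circle case separately by exhibiting an explicit homeomorphism of surfaces that is the identity away from $\Gamma_Z$.

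The one tactical difference worth noting is that the paper contracts $\Gamma_Z$ \emph{edge by edge}: collapsing a single edge $e$ amounts to collapsing the tile $K_e$ to an interval inside $\surf(\Gamma)$, and one iterates until the whole tree (respectively, all but the final loop of the homotopy circle) is gone. Your argument instead handles the whole component at once via a regular-neighborhood replacement (disk for the tree, annulus/disk for the circle). Both work; the paper's edge-by-edge version is shorter because the bookkeeping you flag as the ``main obstacle'' --- checking that the quotient permutations $\sigma_1^{\Gamma/\Gamma_Z}$, $\sigma_\infty^{\Gamma/\Gamma_Z}$ match the geometric collapse --- becomes trivial when only one edge is collapsed at a time, whereas your global approach requires tracing the $\sigma_\infty$-orbits through all of $H_Z$ in one shot. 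Conversely, your version makes the role of the ``contains an entire boundary cycle'' hypothesis more transparent: it is exactly what forces the regular neighborhood to cap off as a disk rather than create a new handle or cusp.
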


\begin{proof}
It is possible to exhibit a sequence of homeomorphisms starting at $\surf(\Gamma)$ and ending at $\surf(\Gamma/Z)$. If a connected component of $\Gamma_Z$ is a tree with at most one labeled point let $e$ be an edge in that tree. As $e$ is contracted the result on the associated surface is to contract $K_e$ to an interval (one vertex goes to one vertex of the interval and the opposite edge to this vertex is contracted to the other vertex of the interval). This can be done to all edges of the tree without changing the injectivity of the labels. The same can be done on a homotopy circle without labeled points that contains a boundary subgraph. The difference is that in the last step we have a loop being contracted to a point labeling the resulting vertex. This collapse also respects the injectivity of the labels because the homotopy circle did not have a labeled point on it. Such process does not change the homeomorphism type of the surface.
\end{proof}

We can also collapse more general graphs allowing only mild degenerations. If we collapse more arbitrary subsets of edges the homeomorphism type is not preserved but we can show that the singularities thus obtained are simple. We start with the following definition.

\begin{df}
A proper set of edges $Z$ is {\bf semistable} if no component of $\Gamma_Z$ is the set of edges of a negligible subset and every univalent vertex of $\Gamma_Z$ is labeled.  
\end{df}

\begin{rem} \label{trees}
If $Z$ is semistable then every contractible component of $\Gamma_Z$ contains at least two labeled points (otherwise it would be negligible). A component that is a homotopy circle without labeled vertices is necessarily a topological circle because univalent vertices must be labeled. It is also not a boundary subgraph of $\Gamma$ or else it would be negligible.
\end{rem}

\begin{lem}
Given a ribbon graph $\Gamma$ every proper subset $Z \subset E(\Gamma)$ contains a unique maximal semistable subset $Z^{sst} \subset Z$. \label{maxsemis}
\end{lem}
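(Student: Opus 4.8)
The plan is to construct $Z^{sst}$ explicitly by a pruning process and then prove both maximality and uniqueness. First I would observe that the property of being semistable for a subset $W \subseteq Z$ depends only on the connected components of $\Gamma_W$: a set $W$ is semistable precisely when no component of $\Gamma_W$ is the edge set of a negligible subgraph \emph{and} every univalent vertex of $\Gamma_W$ is labeled. So I would analyze the situation component by component. The obstruction to a given component $\Gamma_W$ being ``semistable-admissible'' is of two types — it might be negligible (a tree with at most one labeled point, or a homotopy circle without labels containing an entire boundary cycle), or it might have an unlabeled univalent vertex. I would define a reduction step: given $Z$, repeatedly delete from the current edge set any edge incident to an unlabeled univalent vertex of the current subgraph, and delete entirely any component that is negligible. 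Since $|Z|$ is finite, this terminates; call the result $Z^{sst}$.

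Next I would verify that $Z^{sst}$ is semistable. After the process stabilizes, no component has an unlabeled univalent vertex (else another pruning step would apply), and no component is negligible (else it would have been deleted). The one point requiring care is that deleting an edge at an unlabeled univalent vertex can create a \emph{new} univalent vertex or can disconnect or simplify a component into something negligible — but this is exactly why the process is iterated, and termination guarantees we reach a fixed point where neither defect remains. I would also note that $Z^{sst} \subseteq Z$ is automatic since we only ever delete.

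The heart of the argument — and the step I expect to be the main obstacle — is maximality together with uniqueness, i.e.\ that \emph{every} semistable subset $W \subseteq Z$ is contained in $Z^{sst}$, which forces $Z^{sst}$ to be the unique maximal one. The key claim is that the pruning process never deletes an edge belonging to a semistable $W \subseteq Z$. Suppose for contradiction that at some stage an edge $e$ of such a $W$ is deleted. If $e$ is deleted because its endpoint is an unlabeled univalent vertex of the current subgraph $\Gamma_{Z'}$ with $W \subseteq Z'$, I must show that same vertex is already univalent-and-unlabeled, or becomes so, \emph{within} $\Gamma_W$ — using that the half-edge structure at a vertex of $\Gamma_W$ is inherited from $\Gamma_{Z'}$ and that $W$ is semistable forbids unlabeled univalent vertices; one argues that the component of $e$ in $\Gamma_W$ must then be a tree with $e$ a leaf, and peeling leaves eventually exposes the labeled-points deficiency, contradicting semistability of $W$. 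If instead a whole component $\Gamma_{Z'_0}$ meeting $W$ is deleted for being negligible, then the corresponding sub-component of $\Gamma_W$ is a subgraph of a negligible graph containing at most the same labeled points, hence is itself a tree with $\le 1$ labeled point or a homotopy circle without labels — again a component of $\Gamma_W$ that is the edge set of a negligible subgraph, contradicting semistability of $W$. (Here I would use Remark~\ref{trees}: in a semistable $Z$ every contractible component carries at least two labeled points and every univalent vertex is labeled, which is exactly what rules out both deletion triggers.) Once this claim is in hand, $W \subseteq Z^{sst}$ for all semistable $W$, so $Z^{sst}$ is maximal, and any two maximal semistable subsets each contain the other, giving uniqueness. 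I would close by remarking that the monotone, defect-driven nature of the construction makes the order of prunings irrelevant, which is what underlies uniqueness conceptually.
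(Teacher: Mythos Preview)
Your approach is correct and matches the paper's: both construct $Z^{sst}$ by iteratively pruning edges at unlabeled univalent vertices and then discarding the remaining negligible circular components, with the paper simply asserting that uniqueness ``follows from construction'' where you spell out the maximality argument explicitly. One simplification worth noting: your univalent-vertex case is immediate rather than the ``main obstacle'' you anticipate---if $e \in W \subseteq Z'$ and an endpoint of $e$ is unlabeled and univalent in $\Gamma_{Z'}$, then (since $W \subseteq Z'$) that same vertex is already unlabeled and univalent in $\Gamma_W$, contradicting semistability of $W$ directly without any tree-peeling.
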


\begin{proof}
We give an algorithm to find $Z^{sst}$. Starting from $Z$ remove all edges containing an unlabeled vertex of valence one. Repeat this process until we can not delete further edges. All remaining univalent vertices are labeled. Now throw away all boundary subgraphs with no labeled vertices. At the end what remains is $Z^{sst}$ and we have  $Z^{sst} = \varnothing$ if and only if $Z$ is negligible. The uniqueness of $Z^{sst}$ follows from construction.
\end{proof}

\begin{df}
Let $Z$ be a semistable subset of $\Gamma$. The {\bf reduction} of $\Gamma_Z$ is the result of deleting unlabeled vertices of valence two. We denote the reduction by $\hat{\Gamma}_Z$. 
\end{df}

The reduction of a homotopy circle with no labeled vertices corresponding with a semistable subset is in fact a semistable circle.

A semistable subset $Z$ is {\bf stable} if every component of $\Gamma_Z$ that is a topological circle contains a labeled vertex. An arbitrary proper subset $Z$ contains a unique maximal stable subset $Z^{st}$, which is obtained from $Z^{sst}$ by getting rid of the components that are topological circles without labeled vertices.

\begin{df}
If $\Gamma$ is a $P$-labeled ribbon graph and $Z \subset E(\Gamma)$ is an arbitrary subset define the {\bf edge collapse} of $\Gamma$ respect to $Z$ as the disjoint union \[ \Gamma/Z = \Gamma/\Gamma_Z  \sqcup \hat{\Gamma}_{Z^{sst}} \] with the induced $P$-labeling.
\end{df}

\begin{rem}
This generalizes Definition~\ref{edgecollap} because when $Z$ is negligible $Z^{sst} = \varnothing$ by Lemma~\ref{maxsemis}. Also notice that if $Z_1 \cup Z_2 = \varnothing$ then $\Gamma / (Z_1 \sqcup Z_2) = (\Gamma/Z_1)/Z_2$. \label{splitcollapse}
\end{rem}

The next step is to introduce a generalization of ribbon graphs that will give a cellular decomposition of the decorated moduli space of semistable Riemann surfaces. This is similar to Looijenga's definition in \cite[9.1]{loo} but some changes were required.

Let $Z$ be semistable. Take a vertex in $\Gamma/\Gamma_Z$. This is represented by an orbit of $\sigma^{\Gamma/\Gamma_Z}_0$. If any of the elements in that orbit is the image under $\sigma_0$ of an element of $H_Z$ we call that vertex {\bf exceptional}. In that case there is a corresponding orbit of $\sigma^{\Gamma_Z}_\infty$  that is not an orbit of $\sigma_\infty$ and such that the orbit of the exceptional vertex under $\sigma_0$ has non-trivial intersection with that particular orbit of $\sigma_\infty^{\Gamma_Z}$. In this case we call the elements of the corresponding orbit of $\sigma^{\Gamma_Z}_\infty$ an {\bf exceptional boundary cycle} and the associated subgraph an {\bf exceptional boundary subgraph}.

Consider an involution without fixed points $\iota$ on a subset $N \subset V(\Gamma) \sqcup C(\Gamma)$. The elements of $N$ will be called {\bf nodes}, two elements of the same orbit are {\bf associated} and in this case we may also say that the corresponding connected components of the graph are associated. {\bf Cusp-nodes} and {\bf vertex-nodes} are defined in an obvious way. This involution allows us to identify points in $\surf(\Gamma)$. Denote by $\surf(\Gamma,\iota)$ the resulting surface. Let $\Gamma = \cup_{i \in I} \Gamma_i$ where the $\Gamma_i$'s are the connected components of $\Gamma$. Thus $\pi_0(\Gamma) = \{ [\Gamma_i] \} \cong I$. Set $V_i=V(\Gamma_i)$, $C_i=C(\Gamma_i)$, and $N'_i = N(\Gamma_i)$ the nodes in the $i^{\text{th}}$ component of $\Gamma$. We will only consider graphs with involutions for which the following properties apply:

\begin{enumerate}
\item A connected component of the graph cannot be associated to itself.
\item Two semistable circles cannot be associated.
\item The two cusps of a semistable circle are nodes.
\item A cusp-node can only be associated to a vertex-node and vice versa.
\item The surface $\surf(\Gamma,\iota)$ must be connected.
\end{enumerate}

An {\bf order} for $\Gamma$ is a function $\ord: \pi_0 (\Gamma) \to \N$ satisfying the following properties.
\begin{enumerate}
\item[(i)] If $\ord([\Gamma_i])=k>0$ then there exist $j$ such that $\ord([\Gamma_j])=k-1$.
\item[(ii)] Let $p \in N'_i$ and $q=\iota(p) \in N'_j$, then $p \in C_i$ if and only if  $q \in V_j$ by property (4) on the previous list. In this case we require that $\ord([\Gamma_j])<\ord([\Gamma_i])$.
\end{enumerate}

The following gives some insight into this definition and is not hard to prove.

\begin{lem} \label{order}
Given an order $\ord: \pi_0 \Gamma \to \N$ we have:

\begin{enumerate}
\item If $p\in N'_i$ and $\ord([\Gamma_i])=0$ then $p \in V$.
\item There is a constant $m \in \N$ such that $\ord([\Gamma_i])\le m$ for all $i$ and given $k$ such that $0 \le k \le m$ there exist $i$ with $\ord([\Gamma_i])=k$.
\end{enumerate}
\end{lem}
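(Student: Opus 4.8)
The plan is to read off the three assertions directly from the definition of $\surf(\Gamma)$, the admissibility conditions on the involution $\iota$, and the two defining properties of an order; each is a short bookkeeping argument and I do not expect any single step to be a genuine obstacle. I would treat the items in the stated order, since (2) needs nothing beyond property (ii) of an order together with the non-negativity of $\ord$, and (3) needs only property (i) together with the finiteness of $\pi_0(\Gamma)$.

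For (1), recall that each connected component $\Gamma_i$ of $\Gamma$ is either an honest graph without isolated vertices, hence has at least one edge and so a nonempty half-edge set $H(\Gamma_i)$, or else is a semistable circle. In the first case $\sigma_\infty = \sigma_0^{-1}\sigma_1$ is a permutation of the nonempty set $H(\Gamma_i)$ and therefore has at least one orbit, i.e.\ $\Gamma_i$ has at least one boundary cycle; in the second case the semistable circle has two boundary cycles by fiat. Either way $C_i = C(\Gamma_i) \neq \varnothing$, which is (1). Note that this item does not actually use the order; it is recorded here because it is used in tandem with (2) and (3).

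For (2), suppose $p \in N_i$ with $\ord([\Gamma_i]) = 0$ and, towards a contradiction, that $p \in C_i$, i.e.\ $p$ is a cusp-node. Let $q = \iota(p)$ and let $\Gamma_j$ be the component containing $q$. Property (4) of the admissibility list forces $q \in V_j$, and then property (ii) of an order gives $\ord([\Gamma_j]) < \ord([\Gamma_i]) = 0$, contradicting $\ord([\Gamma_j]) \in \N$. Hence $p \notin C_i$, and since $N \subset V(\Gamma) \sqcup C(\Gamma)$ with the two summands disjoint, we get $p \in V$, which is (2).

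For (3), observe that $\Gamma$ is a finite graph, so $\pi_0(\Gamma)$ is finite and the image of $\ord$ is a nonempty finite subset of $\N$; let $m$ be its maximum, so $\ord([\Gamma_i]) \le m$ for all $i$. That every $k$ with $0 \le k \le m$ lies in the image follows by downward induction from $m$: if $0 < k \le m$ is in the image, say $\ord([\Gamma_i]) = k$, then property (i) yields $j$ with $\ord([\Gamma_j]) = k-1$, so $k-1$ is in the image as well; iterating reaches $0$. This proves (3). The only point that requires any care is checking that the edge cases — semistable circles, and components all of whose nodes are vertex-nodes — are consistent with the admissibility conditions (1)–(5) and (i)–(ii); but these conditions were arranged precisely so that the three arguments above go through verbatim, so I expect this to be the most delicate (yet still routine) part.
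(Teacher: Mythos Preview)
Your proof is correct. The paper does not actually supply a proof for this lemma --- it is introduced with ``The following gives some insight into this definition and is not hard to prove'' and then left to the reader --- so there is nothing to compare against directly. That said, your arguments for (2) and (3) are essentially the same as those the paper gives for the analogous lemma about decorations $(\lambda,\ord)$ on semistable Riemann surfaces earlier in Section~3, and your argument for (1) is the natural one from the construction of $\surf(\Gamma)$; your observation that (1) does not use the order is accurate.
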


\begin{df}
A {\bf semistable ribbon graph} is a ribbon graph $\Gamma$ together with an involution $\iota$ as above and an order function $\ord$.
\end{df}

\begin{rem}
A ribbon graph can be viewed as a semistable ribbon graph with $N=\varnothing$. Notice also that $\surf(\Gamma)$ is the normalization of $\surf(\Gamma,\iota)$. When $N \ne \varnothing$ we call the graph {\bf singular}.
\end{rem}

\begin{figure}
\includegraphics[width=11.5cm]{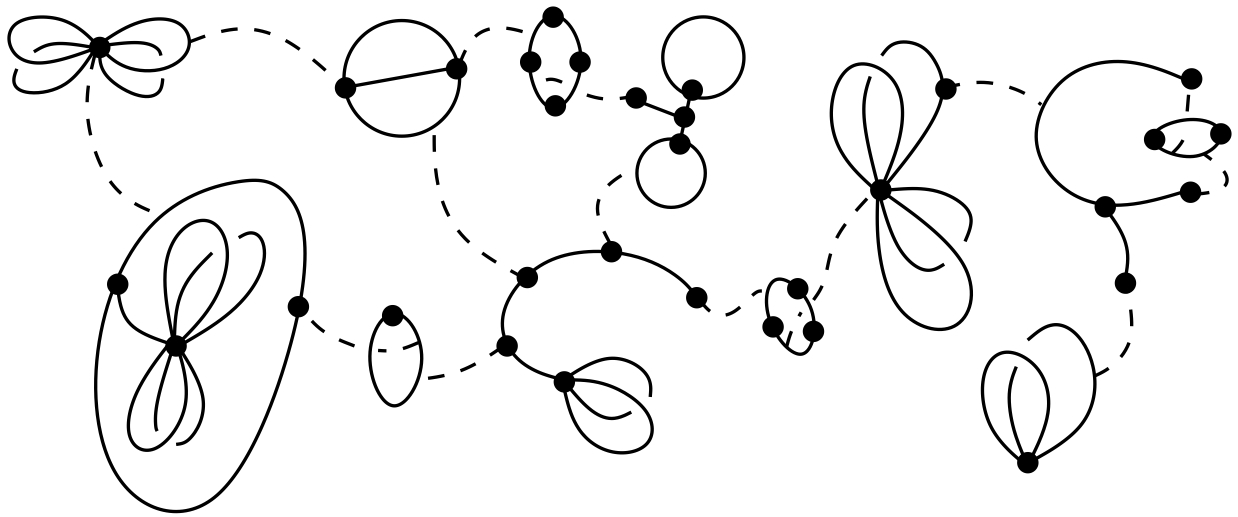}
\caption[Semistable ribbon graph.]{Semistable ribbon graph whose associated surface is isomorphic to the one in Figure~\ref{surford}.} \label{strig}
\end{figure}

\begin{df}
A {\bf $P$-labeled semistable ribbon graph} is a semistable ribbon graph together with an inclusion $x:P \hookrightarrow V(\Gamma) \sqcup C(\Gamma)$ satisfying:
\begin{enumerate}
\item The image $x(P)$ is disjoint from the set of nodes.
\item The union $x(P) \cup N$ contains all distinguished points.
\end{enumerate}
This inclusion is called a $P$-labeling. An {\bf isomorphism} in this case is an isomorphism of the underlying ribbon graph respecting the involution and order as well as the labeling.
\end{df}

A topological surface satisfying all the properties of a $P$-labeled semistable Riemann surface except for its complex structure and the exact value of the positive decorations by real numbers is called a {\bf $P$-labeled semistable topological surface}. This means we remember the order function and whether a decoration is zero or non-zero.

\begin{lem}
If $\Gamma$ is a $P$-labeled semistable ribbon graph then $\surf(\Gamma)$ is a $P$-labeled semistable topological surface.
\end{lem}

\begin{proof}
We need to show that every component of the normalization of $\surf(\Gamma)$ has non-positive Euler characteristic, \emph{i.e.} the Euler characteristic of the components of $\surf(\Gamma) - (N \sqcup x(P))$. We know that $\surf(\Gamma) - C(\Gamma)$ admits $\Gamma$ as a deformation retract. If a component is contractible then it must have at least two labeled points or nodes because such graph has at least two univalent vertices and the union $x(P) \cup N$ contains all distinguished points. This makes the Euler characteristic negative on those components. If the component is a topological circle the Euler characteristic is at most zero. In any other case the connected component of the graph will have negative Euler characteristic.
\end{proof}

We are almost ready to define the edge collapse for semistable ribbon graphs. The order function keeps track of how the graph degenerates and to satisfy its definition we are not allowed to collapse all the edges associated to all components of a given order. Otherwise there would be a ``gap'' in the order function (we would be missing a number in the list of orders in contradiction with Lemma~\ref{order}). This is why we have the following definition. A subset of edges of a given $P$-labeled semistable ribbon graph is called {\bf collapsible} if it does not contain the set of edges of the union of all components of a fixed order for any order $k$. For metric ribbon graphs this type of collapse will be avoided naturally because the metrics considered are unital.

The definition of negligible subset needs to be modified for $P$-labeled semistable ribbon graphs. A boundary subgraph is negligible even if it corresponds to a cusp-node. In this case a total collapse induces an involution without fixed points that would associated a vertex-node with the newly generated vertex-node. This is in contradiction of the definition of semistable ribbon graph. To fix this we simply \emph{exclude} from the definition of {\bf negligible} subset all those components that are homotopy circles without labeled points that contain a boundary subgraph giving rise to a cusp-node. Notice this only makes sense when we have the $P$-labeling and the semistable ribbon graph structure (that includes the involution). The main consequence is that now when doing a total collapse of a boundary subgraph corresponding to a cusp-node this subgraph will not simply disappear. Instead, it will generate a semistable circle. The induced involution without fixed points will associate the old vertex-node and the newly generated vertex-node to both cusps of this semistable circle. In this way the induced involution satisfies the condition of only associating cusp-nodes with vertex-nodes and vice versa. Another consequence is that a {\bf semistable} subset of a $P$-labeled semistable ribbon graph could possibly contain boundary subgraphs giving rise to cusp-nodes.

\begin{df}
If $\Gamma$ is a $P$-labeled semistable ribbon graph and $Z\subset E(\Gamma)$ is a collapsible subset of edges, the {\bf edge collapse} is a new $P$-labeled semistable ribbon graph defined as follows.
\begin{itemize}

\item As a $P$-labeled ribbon graph the edge collapse is $\Gamma/Z$. Notice that the change on the definition of negligible subset creates semistable circles for each total collapse of a homotopy circle without labeled points that corresponds to a cusp-node.

\item There is a new order function defined inductively. For this we express $Z$ as a disjoint union $Z = \sqcup Z_i$ where each component of $\Gamma_{Z_i}$ has order $i$. Let $r$ be the first index such that $Z_r \ne \varnothing$. The new components generated by $\Gamma/\Gamma_{Z_r}$ keep order $r$. The order of the new components in $\hat{\Gamma}_{Z^{sst}_r}$ is $r+1$. Now we increase by one the order of all unaffected components except for those of order less than or equal to $r$. This defines an order function on $\Gamma/Z_r$. By remark~\ref{splitcollapse} we can continue this process inductively until we generate an order function for $\Gamma/Z$.

\item There are possibly new induced nodes together with an involution without fixed points. In the case of the total collapse of a homotopy circle without labeled points that corresponds to a cusp-node the old vertex-node and the newly generated vertex-node are associated to both cusp-nodes of the generated semistable circle. It can be showed following the inductive construction in the previous item that the resulting involution without fixed point satisfies the definition required by a semistable ribbon graph.

\end{itemize} \label{semiedgecollapse}
\end{df}

The previous definition is really a lemma which we state below.

\begin{lem} \label{semi}
The edge collapse of a collapsible subset of a $P$-labeled semistable ribbon graph produces a new $P$-labeled semistable ribbon graph of the same topological type but with possibly more components of higher order and more nodes.
\end{lem}

\begin{rem}
To obtain semistable ribbon graphs with higher orders we need to collapse several subsets of a ribbon graph consecutively. Therefore, the right notion of edge collapse in a category of $P$-labeled semistable ribbon graphs is that of consecutive collapse of collapsible subsets.
\end{rem}

\subsection{Permissible Sequences}

Fix a pair of associated nodes on a $P$-labeled semi-stable ribbon graph. A {\bf tangent direction} is a choice of gluing between the vertex-node and the boundary cycle corresponding to the cusp-node as in Definition~\ref{gluing}. This choice has to be compatible with the cyclic orders on the set of half-edges of the vertex-node and the edges of the graph associated to the exceptional boundary cycle corresponding to the cusp-node. We are just choosing then an element of the finite set of isomorphism classes of graphs created by the gluing construction.

\begin{df}
A {\bf decoration by tangent directions} on a semistable ribbon graph is the choice of tangent directions for each pair of associated nodes.
\end{df}

An {\bf isomorphism} of semistable ribbon graphs decorated by tangent directions must preserve the tangent directions in the sense that the there is an induced graph isomorphism on the corresponding gluings.

The previous definition of semistable ribbon graphs decorated by tangent directions will connect graphs with complex surfaces after introducing metrics on ribbon graphs. The following approach is better suited to induce a topology in the combinatorial moduli space that we will later define.

\begin{df}
Given a $P$-labeled ribbon graph $\Gamma$, a {\bf permissible sequence} is a sequence \[ Z_\bullet = (E(\Gamma)=Z_0,Z_1,...,Z_k) \] such that $Z_i \subset Z_{i-1}^{sst}$ where the inclusion is strict. We call $k$ the length of the sequence. The pair $(\Gamma, Z_\bullet)$ denotes a labeled ribbon graph and a permissible sequence in it. If in addition all $Z_i$'s are semistable we call this a {\bf semistable sequence}. An {\bf isomorphism} of ribbon graphs with permissible sequences is a ribbon graph isomorphism that preserve the permissible sequences. 
\end{df}

\begin{rem}
The length of the sequence will correspond with the maximal order of an associated semistable ribbon graph. Notice also that there is a natural bijection between pairs of length zero and $P$-labeled ribbon graphs.
\end{rem}

\begin{df}
A {\bf negligible} subset of $(\Gamma,Z_\bullet)$ is a sequence $D_\bullet = (D_0, D_1,...,D_k)$ such that all $D_i$ are negligible, $D_i \subset D_{i-1}$ and $D_i \subset Z_i$. Call ${\mathcal N}(\Gamma,Z_\bullet)$ the set of negligible subsets of $(\Gamma,Z_\bullet)$. 
\end{df}

\begin{rem} \label{negli}
It is easy to check that we have a bijection between negligible subsets of $\Gamma$ and negligible subsets of $(\Gamma,Z_\bullet)$ by using the natural restriction. Moreover, we can collapse along negligible subsets in a similar way as we did before. Given a permissible sequence $Z_\bullet$ and negligible subset $D_\bullet$ we define the {\bf edge collapse} of $(\Gamma,Z_\bullet)$ along $D_\bullet$ as $(\Gamma / \Gamma_{D_0}, (Z/ D)_\bullet)$ where $(Z / D)_\bullet$ is the sequence induced by edge collapse. It can be shown that the result is also permissible and has the same length. 
\end{rem}

Now that we know how to collapse along negligible subsets, we also want to be able to collapse permissible sequences along semistable subsets but we need to be careful on how we define the new sequence. Let $(\Gamma,Z_\bullet)$ be a $P$-labeled ribbon graph together with a permissible sequence. A subset $S \subset E(\Gamma)$ is {\bf collapsible} with respect to $(\Gamma, Z_\bullet)$ if $Z_i \not\subset S$ for all $i$. This last definition is similar to the concept of collapsible subset for semistable ribbon graphs and serves the same function.

\begin{lem} \label{semistable}
Given a collapsible subset $S$ with respect to $(\Gamma,Z_\bullet)$ and semistable in $\Gamma$, we can induce a new permissible sequence $(Z / S)_\bullet$ inductively. 
\end{lem}

\begin{proof}
Let $i$ be the integer satisfying $S \subset Z_i$ and $S \subset \!\!\!\!\!\! / \,\,\, Z_{i+1}$. Then $(Z/S)_j = Z_j$ for $j \le i$. Set $(Z/S)_{i+1} = S \cup Z_{i+1}$ and $(Z/S)_{i+2} = Z_{i+1}$. Now, if $S\cap (Z_{i+1} - Z_{i+2}) \ne \varnothing$ then $(Z/S)_{i+3} = (S - Z_{i+1}^c ) \cup Z_{i+2}$ and $(Z/S)_{i+4} = Z_{i+2}$, otherwise $(Z/S)_{i+3} = Z_{i+2}$. We can continue this process until the we reach the last step: either we exhaust all of $S$ meaning that the last element of the sequence will be $(Z/S)_l = Z_k$ or $(Z/S)_l = S - Z_k^c$ where $k$ is the length of $Z_\bullet$ and $l$ the length of the new sequence. The resulting sequence can be shown to be permissible and will have $l>k$. The resulting pair is then $(\Gamma, (Z/S)_\bullet)$. 
\end{proof}

\begin{prop}  \label{corres}
A $P$-labeled ribbon graph together with a permissible sequence $Z_\bullet$ can be used to construct a $P$-labeled semistable ribbon graph.
\end{prop}

\begin{proof}
For $i>0$ we can always collapse $Z_i - Z_i^{sst}$ since these sets are negligible due to maximality. Therefore we can assume that all $Z_i$ are semistable for $i>0$. The disjoint union $\Gamma / \Gamma_{Z_1} \sqcup \hat{\Gamma}_{Z_1}$ naturally inherits a semistable ribbon graph structure through the involution identifying exceptional vertices with their corresponding exceptional boundary cycles. The connected components of $\hat{\Gamma}_{Z_1 - Z_1^{st}}$ are semistable circles. The components in $\Gamma / \Gamma_{Z_1}$ only contain vertex-nodes and thus all those components have order zero. All the components of $\hat{\Gamma}_{Z_1}$ have at least one cusp-node associated to a vertex-node in a component of order zero and hence all those components have order one. The $P$-labeling naturally induces a $P$-labeling on the semistable ribbon graph. We can inductively apply this process to $\hat{\Gamma}_{Z_i}$ and $Z_{i+1}$ thus obtaining a $P$-labeled semistable ribbon graph $(\Gamma / \Gamma_{Z_1} \sqcup \hat{\Gamma}_{Z_1} / \Gamma_{Z_2} \sqcup \cdots \sqcup \hat{\Gamma}_{Z_k},\iota,x)$. 
\end{proof}

Now we describe the connection between ribbon graphs with semistable sequences and semistable ribbon graphs with decorations by tangent directions. 

\begin{thm} \label{bijec}
There is a natural bijection between isomorphism classes of $P$-labeled ribbon graphs with semistable sequences and isomorphism classes of $P$-labeled semistable ribbon graphs with decorations by tangent directions. This identification preserves isomorphism classes of negligible and collapsible semistable subsets (with respect to the given structures) and commutes with the edge collapse of the corresponding sets.
\end{thm}

\begin{proof}
Let $\Gamma$ be a $P$-labeled ribbon graph and $Z_\bullet$ a semistable sequence. This generates a $P$-labeled semistable ribbon graph by Proposition~\ref{corres}. To obtain the decorations by tangent directions, it is enough to keep track of where the half-edges of a vertex-node were attached on the original graph. This correspondence naturally descends to a correspondence on isomorphism classes.

Now suppose we have a $P$-labeled semistable ribbon graph decorated by tangent directions. The decorations by tangent directions allow us to reconstructs a $P$-labeled ribbon graph by using the gluing construction on vertex-nodes and boundary cycles. Since this is defined only up to isomorphism this correspondence is well defined on isomorphism classes. On a representative, every component of a semistable graph induces a subgraph of the ribbon graph. Together with the order this defines a sequence of subgraphs $Z_\bullet$ in the ribbon graph up to isomorphism. It is not hard to check that this sequence will indeed be semistable.

These correspondences are inverses of each other on isomorphism classes by construction. Remark~\ref{negli} implies that negligible subsets are preserved and it also implies the commutativity with the edge collapse. For collapsible semistable subsets we also use the natural restriction and the gluing construction to track the image of these sets under the bijection. By the definitions, Lemma~\ref{semi} and Lemma~\ref{semistable} we can show that collapsible semistable subsets are also preserved by the bijection.
\end{proof}

\begin{rem}
In fact it is possible to define a category of semistable ribbon graphs and another one of ribbon graphs with permissible sequences. After defining the right notion of morphism the previous theorem can be extended to an equivalence of appropriate categories.
\end{rem}

\section{Cellular Decompositions} \label{celudecom}

\subsection{Metrics on Ribbon Graphs}

\begin{df}
A {\bf metric} on a ribbon graph $\Gamma$ is a map $l:E(\Gamma) \to \R_+$. If the sum of the lengths of all edges is one we call this a {\bf unital metric} or {\bf conformal structure}. A unital metric on a semistable ribbon graph is a sequence $\{l_\bullet\}$ of unital metrics on every union of connected components of a fixed order. We call such structure a {\bf conformal semistable metric}.
\end{df}

Notice that the surface $\surf(\Gamma) - C(\Gamma)$ inherits a piece-wise Euclidean metric induced by the lengths of the edges. 

An {\bf isomorphism of metric ribbon graphs} is a ribbon graph isomorphism that respects the metric. The space of conformal structures on $\Gamma$ up to isomorphism will be denoted by $cf(\Gamma)$. We use the same notation when $\Gamma$ and the metric are semistable. If the ribbon graphs are $P$-labeled we require such isomorphism to fix the labels pointwise. A point in $cf(\Gamma)$ can be denoted by $\Gamma_\text{met}$. The main consequence of having a metric on a ribbon graph is the following.

\begin{prop} \label{metric}
A metric on a ribbon graph induces a complex structure on the surface it determines.
\end{prop}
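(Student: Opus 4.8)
The plan is to construct explicitly a Jenkins--Strebel-type flat metric on $\surf(\Gamma)$ and appeal to the classical correspondence between such metrics-with-cone-points and complex structures on punctured surfaces. Concretely, recall from the construction of $\surf(\Gamma)$ that it is tiled by the semi-infinite rectangles $\ol K_e = |e| \times \R_{\ge 0}$, one for each oriented edge, glued along their sides. A metric $l \colon E(\Gamma) \to \R_+$ lets us rescale the base $|e|$ of each rectangle to have length $l(e)$, so that each $\ol K_e$ becomes $[0,l(e)] \times \R_{\ge 0}$ with its standard flat Euclidean structure; because the gluing maps identify sides by isometries (a base of length $l(e)$ with another base of the same length, and vertical sides by the identity on $\R_{\ge0}$), these flat charts patch together to a flat metric on $\surf(\Gamma) - (V(\Gamma)\sqcup C(\Gamma))$.

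First I would record that away from the vertices and cusps this flat metric has trivial holonomy in the plane charts coming from the tiles, so it equips the complement of the distinguished points with a translation-surface structure, in particular with a holomorphic atlas: the transition maps between overlapping rectangle interiors are Euclidean translations, hence biholomorphic, and there is a globally defined holomorphic quadratic differential $q = (dz)^2$ in these coordinates. Next I would analyze the local structure at a vertex $v$ of valence $d = |H_v|$: the $d$ corners of the incident rectangles fit together around $v$ into a cone of total angle $d\pi$, which is the standard local model of $q$ having a zero of order $d-2$ (for $d \ge 3$) or a regular point (for $d=2$, where the two corners sum to angle $2\pi$ — note $P$-labeled vertices of valence one or two are exactly the distinguished points, handled as marked points). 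At a cusp, the half-strips of the boundary cycle glue to a semi-infinite cylinder of circumference equal to the perimeter of that cycle, which is the standard local model of $q$ having a double pole; adjoining the cusp point as in the construction of $\surf(\Gamma)$ fills in a punctured disk. Thus the flat metric extends across the vertices as a metric with cone singularities and exhibits $\surf(\Gamma)$ as a closed surface carrying a holomorphic structure with a meromorphic quadratic differential of Jenkins--Strebel type, whose critical graph is precisely $\Gamma$.

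I would then invoke the standard uniformization statement — due in this combinatorial packaging to Harer, Mumford, Penner, Thurston, and used throughout \cite{mupe} — that a punctured Riemann surface together with a Jenkins--Strebel quadratic differential with prescribed residues is equivalent to a metric ribbon graph, and in particular that the flat atlas built above determines a genuine complex structure on the underlying topological surface $\surf(\Gamma)$, compatible with the $P$-labeling. For the semistable case one applies the construction componentwise to $\surf(\Gamma)$, the normalisation of $\surf(\Gamma,\iota)$, since by definition a unital metric on a semistable ribbon graph is just a unital metric on each connected component; the involution $\iota$ then glues the resulting Riemann surfaces at the nodes, yielding a (possibly nodal) complex structure. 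The main obstacle — really the only subtle point — is checking that the flat structure genuinely extends smoothly across the cone points and cusps with the claimed local models (the angle count at vertices and the cylinder picture at cusps) so that the holomorphic atlas is defined on all of the closed surface rather than merely on the complement of the distinguished points; once that local verification is in place, the global complex structure follows from the charts, and the dependence on $l$ is manifestly continuous.
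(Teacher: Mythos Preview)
Your proposal is correct and follows essentially the same approach as the paper: both use that the tiles $K_e = [0,l(e)]\times\R_{\ge 0}$ are subsets of $\C$ once the metric is fixed, patch them into a holomorphic atlas on $\surf(\Gamma)-(\Gamma\cup P)$, and then extend across the special points. The paper's proof is a two-line sketch deferring the local extension to \cite[Theorem~5.1]{mupe} and \cite[6.2]{loo}, whereas you spell out the cone-angle analysis at vertices and the cylinder model at cusps and frame the result in the Jenkins--Strebel language that the paper introduces only in the following subsection; this is the same argument with more detail, not a different route.
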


\begin{proof} This is the reason why $\surf(\Gamma)$ was constructed out of patches of the complex plane.

Now that every edge has a well-defined length, the tiles $K_e$ are subsets of the complex plane. It is then possible to give $\surf(\Gamma) - \{ \Gamma \cup P \}$ a canonical atlas of complex charts. Such complex structure extends to $\surf(\Gamma)$ making this a compact Riemann surface with $P$-labeled points denoted by $C(\Gamma,l)$ (see \cite[Theorem 5.1]{mupe} and \cite[6.2]{loo}).

\end{proof}

The previous construction can be carried out on the irreducible components of a semistable surface. Therefore, given a conformal semistable ribbon graph we can induce a conformal structure on the singular surface it determines.

\begin{rem} \label{orbicell}
There is a natural identification \[ cf(\Gamma) = \left( \prod_{k \ge 0} \stackrel{\circ}{\Delta}_{E(\Gamma_k)}\right) / G \] where $\Gamma_k$ is the subgraph containing all components of order $k$, $\stackrel{\circ}{\Delta}_{E(\Gamma_k)}$ is the open simplex generated by the set of edges of $\Gamma_k$ and $G$ is a finite group acting by automorphisms of metric ribbon graphs. This is thus a rational cell following the language of \cite{mupe} which we call an {\bf orbicell}. 
\end{rem}

Now we follow the notation in sections 2 and 3 of \cite{mupe}. We use their definition of orbifold, differentiable orbifold and orbifold-cell decomposition which we are calling an {\bf orbicell decomposition} of an orbifold. For an alternate definition one can check the Appendix in \cite{cos:dpv}.

\begin{df} A {\bf near conformal structure} on a ribbon graph $\Gamma$ is a conformal structure $l:E(\Gamma) \to \R_{\ge 0}$ whose zero set is negligible. The space of near conformal structures is denoted by $ncf(\Gamma)$. 
\end{df}

\begin{df}
Given a $P$-labeled ribbon graph and a permissible sequence $Z_\bullet$ a {\bf semistable conformal structure} with respect to such a sequence is a conformal structure on every difference $\Gamma_{Z_k - Z_{k+1}}$.
\end{df}

\begin{rem} \label{stm} 
From the previous definition we can see that a semistable conformal metric may be given as a sequence of functions $l_k:Z_k \to \R_{\ge 0}$ such that $l_k$ has zero set $Z_{k+1}$ (so $l_\bullet$ determines $Z_\bullet$) and the total length of each $Z_k$ adds up to one. We can thus define the spaces $cf(\Gamma,Z_\bullet)$ and $ncf(\Gamma, Z_\bullet)$.
\end{rem}

Now we construct an orbicell decomposition made out of semistable ribbon graphs.

\begin{df}
The {\bf moduli space of $P$-labeled semistable ribbon graphs of genus $g$ decorated by tangent directions} is defined as \[ \MUC{g}{P} = \coprod_{[(\Gamma,Z_\bullet)]} cf(\Gamma, Z_\bullet) \] where the union is taken over isomorphism classes of $P$-labeled semistable ribbon graphs with decorations by tangent directions of topological type $(g,|P|)$ and permissible sequences.
\end{df}

\begin{thm}
The set $\MUC{g}{P}$ has a natural structure of a topological space.
\end{thm}

\begin{proof}
The topology of the orbicell decomposition is determined by how the orbicells are glued together. Two orbicells are glued when one can be obtained from the other by collapsing edges. Given any non-empty proper subset of edges $Z_1$ we can glue a new orbicell along the boundary (notice that the properness is necessary since the sum of edges always adds up to one). If $Z_1$ is negligible this is just part of $ncf(\Gamma)$ which gives a partial compactification. Otherwise take $Z_1 - Z_1^{sst}$ first, and then glue along $cf(\Gamma,(E(\Gamma),Z_1 - Z_1^{sst}))$. However there might be missing pieces of the boundary. Those pieces correspond to possible degenerations of $\Gamma_{Z_1^{sst}}$. This process can be understood as using $ncf(\Gamma)$ to glue orbicells. If we continue this way we can inductively glue orbicells corresponding with semistable ribbon graphs of higher order.

Now we describe a system of neighborhoods that generate the topology of $\MUC{g}{P}$. Recall from \cite[Section 3]{mupe} that we write $\Gamma_1 \prec \Gamma_2$ when $\Gamma_1$ can be obtained from $\Gamma_2$ by edge collapse. We also say then that $\Gamma_2$ is obtained by {\bf edge expansion} of $\Gamma_1$. This definition can be extended to $P$-labeled semistable ribbon graphs in a natural way due to Definition~\ref{semiedgecollapse}. This implies that the edge expansion also includes desingularization of graphs. Given a $\Gamma_\text{met} \in \MUC{g}{P}$ let $\epsilon>0$ be a positive number smaller than half of the length of the shortest edge of $\Gamma_\text{met}$. The {\bf $\epsilon$-neighborhood} of $\Gamma_\text{met}$ in $\MUC{g}{P}$, denoted by $U_\epsilon (\Gamma_\text{met})$, is the set of all $P$-labeled semistable metric ribbon graphs $\Gamma_\text{met}'$ satisfying the following conditions.
\begin{itemize}
\item $\Gamma \preceq \Gamma'$.
\item The edges of $\Gamma_\text{met}'$ that are contracted into $\Gamma_\text{met}$ have length less than $\epsilon$.
\item Let $e'$ be an edge of $\Gamma'_\text{met}$ that is not contracted and corresponds to an edge $e$ of $\Gamma_\text{met}$ of length $L$. Then, the length $L'$ of $e'$ is in the range \[ L - \epsilon < L' < L+\epsilon. \]
\item The lengths of the edges in $\Gamma'_\text{met}$ are chosen so that the metric is still a conformal semistable metric.
\end{itemize}
For non-singular graphs and possibly non-unital metrics this is the same as \cite[Definition 3.1]{mupe}. The topology of $\MUC{g}{P}$ is defined as the smallest topology that has these $\epsilon$-neighborhoods as open sets.
\end{proof}

\begin{rem}
In fact it is possible to extend the proof of \cite[Theorem 3.5]{mupe} to our case in order to show that $\MUC{g}{P}$ is a differentiable orbifold.
\end{rem}

By forgetting the decorations by tangent directions we obtain the following definition.

\begin{df}
The {\bf moduli space of $P$-labeled semistable ribbon graphs of genus $g$} is defined as \[ \MOC{g}{P} = \coprod_{[\Gamma]} cf(\Gamma) \] where $[\Gamma]$ is an isomorphism class of $P$-labeled semistable ribbon graph of topological type $(g,|P|)$.
\end{df}

In light of Remark~\ref{stm} and Theorem~\ref{bijec}, these orbicell decompositions can be defined in terms of $P$-labeled ribbon graphs together with conformal semistable metrics. This comes with a map $\MUC{g}{P} \to \MOC{g}{P}$ induced by the map forgetting the decorations by tangent directions on a semistable ribbon graph. The preimage of a point is the space of decorations by tangent directions on a particular class of conformal semistable ribbon graph. This map allow us to induce the quotient topology on $\MOC{g}{P}$ using the previous theorem.

\begin{ex}
To visualize some orbicells and how they fit together consider the space $\MOC{0}{P}$ where $|P|=4$. Figure~\ref{maximalcell} shows a trivalent graph and how it can degenerate to two different semistable ribbon graphs. The number over a component of a graph denotes its order. The trivalent graph determines an orbicell of the form $\stackrel{\circ}{\Delta}_5$ whose dimension agrees with the dimension of the corresponding moduli space. If we collapse the subgraph determined by the big circle and what it is inside it we obtain the graph on the bottom left. If we collapse only the big circle we obtain the graph on the bottom right. The singular graph on the bottom left corresponds with an orbicell of the form $\stackrel{\circ}{\Delta}_1 \times \stackrel{\circ}{\Delta}_2$ and the one on the bottom right with an orbicell of the form $\stackrel{\circ}{\Delta} _3 \times \stackrel{\circ}{\Delta}_0$ where $\stackrel{\circ}{\Delta}_0$ comes from the semistable circle. This last graph has five edges, but only four of them can be collapsed since the semistable circle can not any more. Those four edges are labeled $a$, $b$, $c$ and $d$. Since this last orbicell is three-dimensional we show in Figure~\ref{deginci} this orbicell together with its degenerations. The straight arrows correspond with faces on the front and the curved arrows with faces on the back.
\end{ex}

\begin{figure} 
\begin{tikzpicture}[scale=0.35]
\draw (0,0) circle (1cm);
\draw (6,0) circle (1cm);
\draw (6,0) circle (3cm);
\draw (1,0) -- (3,0);
\draw (7,0) -- (9,0);

\draw[black, fill=black] (1,0) circle (5pt);
\draw[black, fill=black] (3,0) circle (5pt);
\draw[black, fill=black] (7,0) circle (5pt);
\draw[black, fill=black] (9,0) circle (5pt);
\node at (4,5) {0};

\draw[thick,->] (0,-4) -- (-3,-6);
\draw[thick,->] (8,-4) -- (11,-6);


\draw (-11,-12) circle (1cm);
\draw (-10,-12) -- (-8,-12);
\draw (-3,-12) circle (1cm);
\draw (-3,-12) circle (3cm);
\draw (-2,-12) -- (0,-12);

\draw[black, fill=black] (-10,-12) circle (5pt);
\draw[black, fill=black] (-8,-12) circle (5pt);
\draw[black, fill=black] (-2,-12) circle (5pt);
\draw[black, fill=black] (0,-12) circle (5pt);

\node at (-10,-8) {0};
\node at (-3,-8) {1};


\draw (8,-12) circle (1cm);
\draw (9,-12) -- (11,-12);
\draw (14,-12) circle (1cm);
\draw (17,-12) -- (19,-12);
\draw (20,-12) circle (1cm);

\draw[black, fill=black] (9,-12) circle (5pt);
\draw[black, fill=black] (11,-12) circle (5pt);
\draw[black, fill=black] (17,-12) circle (5pt);
\draw[black, fill=black] (19,-12) circle (5pt);

\node at (9,-9) {0};
\node at (14,-9) {1};
\node at (19,-9) {0};

\node at (8,-14) {$a$};
\node at (10,-13.5) {$b$};
\node at (18,-13.5) {$c$};
\node at (20,-14) {$d$};

\end{tikzpicture}
\caption[Degeneration of trivalent graph]{Two degenerations of a trivalent graph in $\MOC{0}{4}$.} \label{maximalcell}
\end{figure}
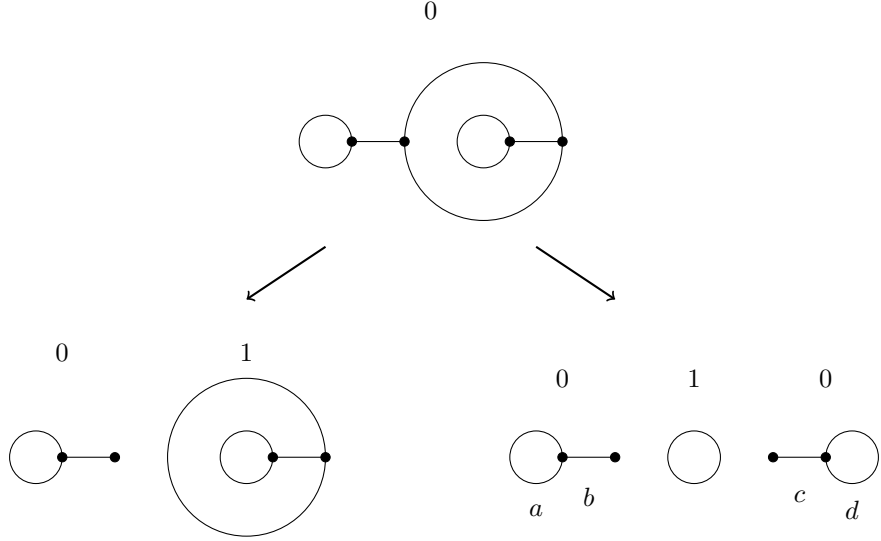

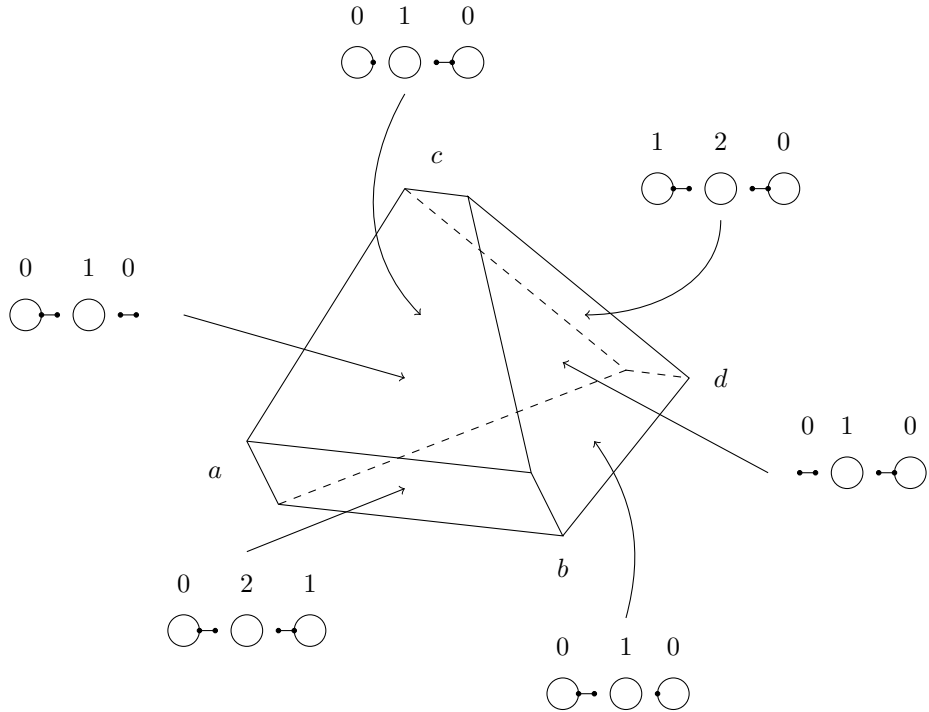
\begin{figure} 
\begin{tikzpicture}[scale=0.42]

\draw[dashed] (4,10) -- (11,4.25) -- (0,0);
\draw[dashed] (11,4.25) -- (13,4);

\draw (9,-1) -- (13,4) -- (6,9.75) -- (8,1) -- (9,-1) -- (0,0) -- (-1,2) -- (4,10) -- (6,9.75);
\draw (-1,2) -- (8,1);

\node at (-2,1) {$a$};
\node at (9,-2) {$b$};
\node at (5,11) {$c$};
\node at (14,4) {$d$};

\draw (-3,-4) circle (0.5cm);
\draw (-2.5,-4) -- (-2,-4);
\draw (-1,-4) circle (0.5cm);
\draw (0,-4) -- (0.5,-4);
\draw (1,-4) circle (0.5cm);
\draw [black, fill=black] (-2.5,-4) circle (2pt);
\draw [black, fill=black] (-2,-4) circle (2pt);
\draw [black, fill=black] (0,-4) circle (2pt);
\draw [black, fill=black] (0.5,-4) circle (2pt);
\node at (-3,-2.5) {0};
\node at (-1,-2.5) {2};
\node at (1,-2.5) {1};

\draw[->] (-1,-1.5) -- (4,0.5);

\draw (-3+15,-4+14) circle (0.5cm);
\draw (-2.5+15,-4+14) -- (-2+15,-4+14);
\draw (-1+15,-4+14) circle (0.5cm);
\draw (0+15,-4+14) -- (0.5+15,-4+14);
\draw (1+15,-4+14) circle (0.5cm);
\draw [black, fill=black] (-2.5+15,-4+14) circle (2pt);
\draw [black, fill=black] (-2+15,-4+14) circle (2pt);
\draw [black, fill=black] (0+15,-4+14) circle (2pt);
\draw [black, fill=black] (0.5+15,-4+14) circle (2pt);
\node at (-3+15,-2.5+14) {1};
\node at (-1+15,-2.5+14) {2};
\node at (1+15,-2.5+14) {0};

\draw[->] (-1+15,-2.5+11.5) to [out=270,in=0] (9.7,6);

\draw (-3-5,-4+10) circle (0.5cm);
\draw (-2.5-5,-4+10) -- (-2-5,-4+10);
\draw (-1-5,-4+10) circle (0.5cm);
\draw (0-5,-4+10) -- (0.5-5,-4+10);
\draw [black, fill=black] (-2.5-5,-4+10) circle (2pt);
\draw [black, fill=black] (-2-5,-4+10) circle (2pt);
\draw [black, fill=black] (0-5,-4+10) circle (2pt);
\draw [black, fill=black] (0.5-5,-4+10) circle (2pt);
\node at (-3-5,-2.5+10) {0};
\node at (-1-5,-2.5+10) {1};
\node at (0.25-5,-2.5+10) {0};

\draw[->] (-3,6) -- (4,4);

\draw (-3+12,-6) circle (0.5cm);
\draw (-2.5+12,-6) -- (-2+12,-6);
\draw (-1+12,-6) circle (0.5cm);
\draw (0.5+12,-6) circle (0.5cm);
\draw [black, fill=black] (-2.5+12,-6) circle (2pt);
\draw [black, fill=black] (-2+12,-6) circle (2pt);
\draw [black, fill=black] (0+12,-6) circle (2pt);
\node at (-3+12,-4.5) {0};
\node at (-1+12,-4.5) {1};
\node at (0.5+12,-4.5) {0};

\draw [->] (11,-3.6) to [out=75,in=-55] (10,2);

\draw (-2.5+19,1) -- (-2+19,1);
\draw (-1+19,1) circle (0.5cm);
\draw (0+19,1) -- (0.5+19,1);
\draw (1+19,1) circle (0.5cm);
\draw [black, fill=black] (-2.5+19,1) circle (2pt);
\draw [black, fill=black] (-2+19,1) circle (2pt);
\draw [black, fill=black] (0+19,1) circle (2pt);
\draw [black, fill=black] (0.5+19,1) circle (2pt);
\node at (-2.5+19.25,2.5) {0};
\node at (-1+19,2.5) {1};
\node at (1+19,2.5) {0};

\draw [->] (15.5,1) -- (9,4.5);

\draw (-3+5.5,-4+18) circle (0.5cm);
\draw (-1+5,-4+18) circle (0.5cm);
\draw (0+5,-4+18) -- (0.5+5,-4+18);
\draw (1+5,-4+18) circle (0.5cm);
\draw [black, fill=black] (-2+5,-4+18) circle (2pt);
\draw [black, fill=black] (0+5,-4+18) circle (2pt);
\draw [black, fill=black] (0.5+5,-4+18) circle (2pt);
\node at (-3+5.5,-2.5+18) {0};
\node at (-1+5,-2.5+18) {1};
\node at (1+5,-2.5+18) {0};

\draw [->] (4,13) to [out=-120,in=135] (4.5,6);

\end{tikzpicture}
\caption[Incidence of orbicells]{Degenerations of the second singular graph in Figure~\ref{maximalcell} and how the corresponding orbicells fit together.} \label{deginci}
\end{figure}

\subsection{Strebel-Jenkins Differentials} A {\bf meromorphic quadratic differential} on a Riemann surface $C$ is a meromorphic section of $(T^*C)^{\odot 2}$, the second symmetric power of the cotangent bundle. The notions of zero and order of a zero of these differentials do not depend on the local representation. In the same way the notion of pole and order of a pole are stable by change of coordinates. Zeros and poles will be call {\bf critical points}. If the quadratic differential has a pole of order two this is called a {\bf double pole} and a pole of order one a {\bf simple pole}. Given a representation in local coordinates $f(z)dz^2$ around a double pole $q$ we can express $f$ as

$$ f(z) = \frac{a_{-2}}{z^2} + \frac{a_{-1}}{z} + a_0 + \cdots $$

\noindent and call the term $a_{-2}$ its {\bf quadratic residue}. It can be shown that this number does not depend on the choice of local coordinates. 

These differentials define certain curves on the Riemann surface. If $q=f(z)dz^2$ is a meromorphic quadratic differential then the parametric curve $\vec{r}:(a,b) \to C$ is called a {\bf horizontal trajectory} or {leaf} of $q$ if \[ f(\vec{r}(t)) \left( \frac{d\vec{r}(t)}{dt} \right)^2 > 0 \] and {\bf vertical trajectory} if \[ f(\vec{r}(t)) \left( \frac{d\vec{r}(t)}{dt} \right)^2 < 0. \] 

The quadratic differentials we are particularly interested on are the following.

\begin{df}
A {\bf Strebel-Jenkins differential} is a meromorphic quadratic differential with only simple poles or double poles with negative quadratic residues.
\end{df}

In the case of Strebel-Jenkins differentials we have two kinds of leaves: closed ones (surrounding a double pole) and critical ones (connecting zeroes and simple poles). The union of critical leaves, zeroes and simple poles forms the {\bf critical graph}. The vertical trajectories connect the double poles to the critical graph and are orthogonal to the closed leaves under the metric induced by $\sqrt{q}$. The following existence and uniqueness theorem follows from the work of Jenkins and Strebel (see \cite{strebel} and \cite[Theorem 7.6]{loo}).

\begin{figure}
$$
\begin{array}{ccc}
\includegraphics[width=2.9cm]{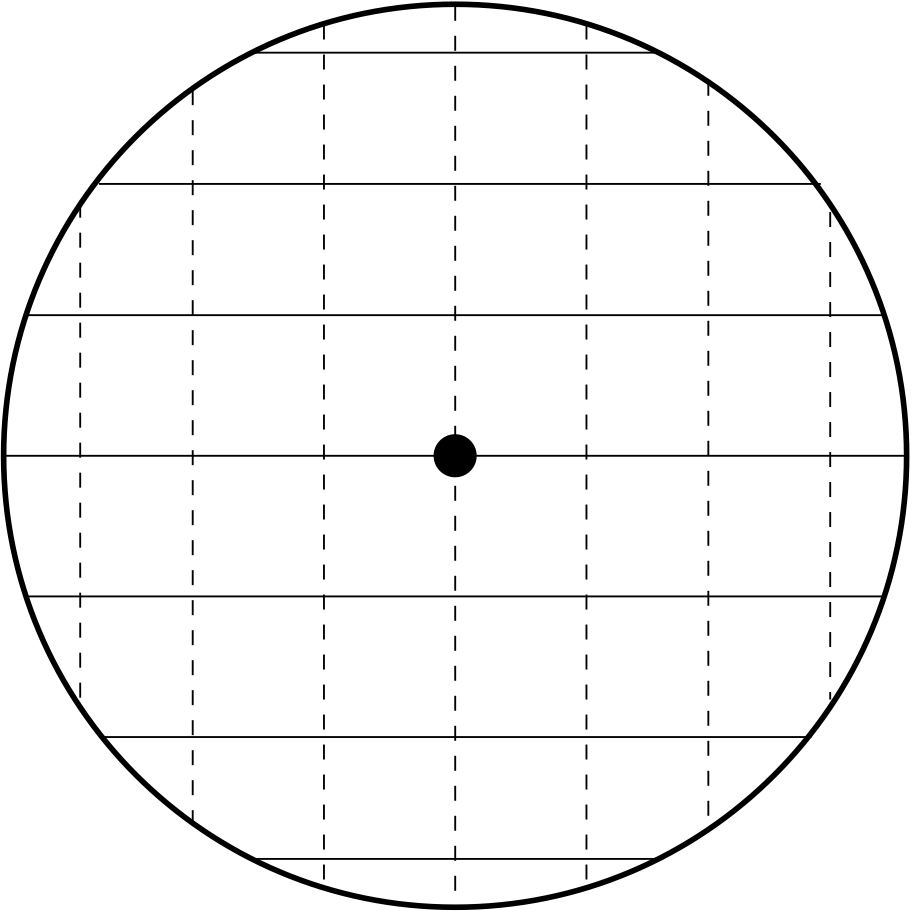} & \quad \includegraphics[width=3cm]{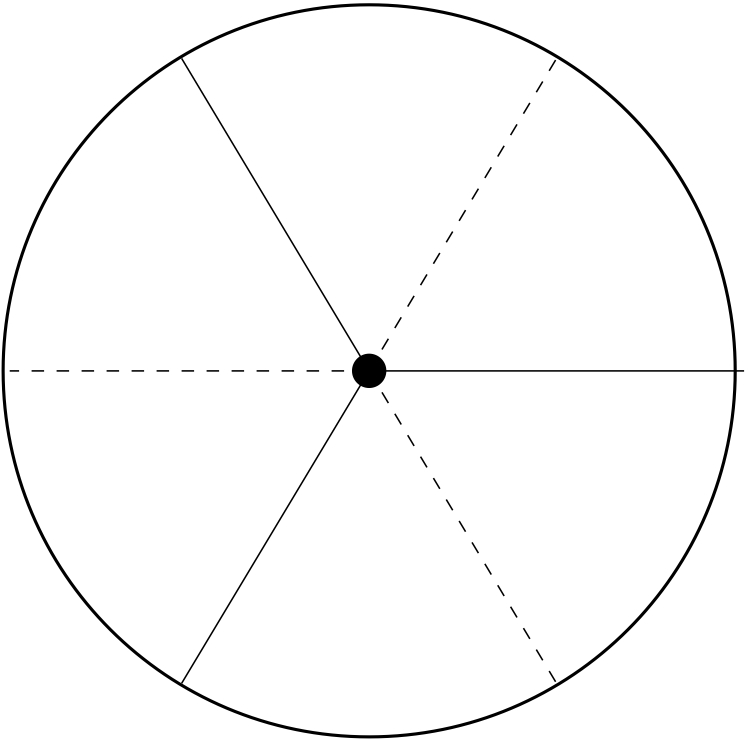} & \quad \includegraphics[width=3cm]{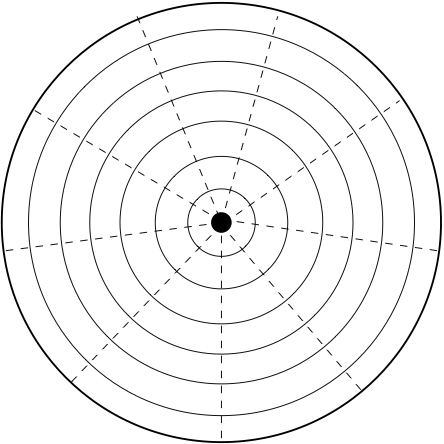} \\
q = dz^2 & \quad q = z^m dz^2 & \quad q = - \frac{dz^2}{z^2}
\end{array}
$$
\caption{Different behaviors of Strebel-Jenkins differentials. The solid lines represent horizontal trajectories and the dotted one vertical trajectories.}
\end{figure}

\begin{thm} \label{strebel}
Given a Riemann Surface of genus $g$ with labeled points $P$ and decorations $\lambda \in \Delta_P$ there exists a unique quadratic differential with the following properties. It is holomorphic on the complement of $P$. The union of closed leaves form semi-infinite cylinders around the points with non-zero decoration. The quadratic residues coincide with $\lambda$. The labeled points decorated by zero lie on the critical graph. 
\end{thm}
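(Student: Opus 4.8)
The plan is to recognise the statement as (a mild extension of) Strebel's theorem and to prove it in two stages: the classical case of a strictly positive decoration, followed by a degeneration to the boundary faces of $\Delta_P$.

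\emph{Strictly positive decorations.} If every $\lambda_i>0$ this is the classical existence and uniqueness of a Jenkins--Strebel differential with prescribed cylinder circumferences, which I would obtain by the extremal-length/module method of Jenkins and Strebel. For each $i$ let $\Gamma_i$ be the family of simple closed curves on $C$ freely homotopic to a small loop about $p_i$; one sets up an extremal problem for these families suitably combined and weighted by the $\lambda_i$, whose unique solution is the flat metric $|q|^{1/2}\,|dz|$ of a meromorphic quadratic differential $q$, holomorphic on $C\setminus P$, with a double pole at each $p_i$ of the prescribed negative quadratic residue, whose closed horizontal trajectories fill a maximal disjoint union of ring domains --- one semi-infinite cylinder of $|q|^{1/2}$-circumference $\lambda_i$ around each $p_i$ --- with the critical graph as complement. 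Uniqueness is the standard convexity/minimal-norm argument. This is exactly the content of \cite{strebel} and \cite[Theorem 7.6]{loo}.

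\emph{Passing to the boundary.} Write $S=\{i:\lambda_i>0\}$; the case $S=P$ is the above. In general pick decorations $\lambda^{(k)}\in\Delta_P$ with all coordinates positive and $\lambda^{(k)}\to\lambda$, giving Jenkins--Strebel differentials $q_k$. On compact subsets of $C\setminus\{p_i:i\in S\}$ the $q_k$ are holomorphic and, by the area mean-value inequality applied in local coordinates together with a uniform local $L^1(|q|)$-bound there (which follows from the fixed local form near the surviving double poles and the bounded cylinder circumferences), uniformly bounded; hence $\{q_k\}$ is a normal family, and after passing to a subsequence $q_k\to q$ locally uniformly off $\{p_i:i\in S\}$. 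A residue/removable-singularity computation shows that $q$ extends meromorphically to $C$, with a double pole of the prescribed residue at each $p_i$ with $i\in S$, and with at worst a simple pole at each $p_i$ with $i\notin S$ (the $z^{-2}$ coefficient vanishes in the limit while the $z^{-1}$ coefficient may survive). In particular each $p_i$ with $\lambda_i=0$ is a simple pole, a zero, or a point on a critical trajectory of $q$ --- in all cases it lies on the critical graph. That $q\not\equiv 0$ follows because the circumferences $\lambda^{(k)}_i$ for $i\in S$ stay bounded away from $0$, so the corresponding cylinders survive in the limit, while the cylinders for $i\notin S$ collapse. Thus $q$ has all of the asserted properties. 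Alternatively one can simply quote the generalised Strebel theorem for differentials with both double and simple poles.

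\emph{Uniqueness on the boundary, and the hard point.} Uniqueness persists: the extremal problem of the first stage extends to the boundary --- the constraints for $i\notin S$ become inactive, leaving an extremal problem whose unique solution is again $|q|^{1/2}$ --- or, equivalently, one invokes the uniqueness half of the generalised Strebel theorem; and a marked point $p_i$ with $\lambda_i=0$ cannot lie in the interior of a cylinder of the extremal $q$ without contradicting maximality of the cylinder decomposition, so it is forced onto the critical graph. This yields existence and uniqueness for every $\lambda\in\Delta_P$. I expect the real obstacle to be the degeneration argument itself: the combinatorial type of the critical graph can jump as some $\lambda_i\to 0$, so one must check that no two cylinders merge in the limit, that the limiting critical graph still fills the complement of the surviving cylinders without producing recurrent trajectories or lower-dimensional degeneracies, and --- a point that is needed again elsewhere in the paper --- that the assignment $\lambda\mapsto q$ is continuous. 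Showing that the limit is again genuinely of Jenkins--Strebel type, rather than something more degenerate, is where the analysis concentrates, and where one leans on the compactness built into Strebel's theory.
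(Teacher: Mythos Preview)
The paper does not prove this theorem at all: it is stated as a black-box result and attributed to Jenkins and Strebel, with the precise formulation (allowing some $\lambda_i=0$) referred to \cite[Theorem~7.6]{loo}. So there is no ``paper's own proof'' to compare against; the intended argument is simply a citation.

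Your proposal therefore does strictly more than the paper asks. The first stage (all $\lambda_i>0$, via the extremal-length characterisation and Strebel's convexity uniqueness) is the standard argument and is fine. For the boundary case, however, the degeneration route you sketch is the harder of the two options you mention, and you correctly flag its weak point: controlling the combinatorial type of the critical graph in the limit and ruling out recurrent trajectories or merged cylinders is genuinely delicate, and your normal-family argument as written does not by itself guarantee that the limit $q$ has closed horizontal foliation (being a locally uniform limit of Jenkins--Strebel differentials does not automatically preserve that property without further area/module control). The alternative you allude to --- invoking directly the generalised Strebel theorem that allows prescribed simple poles at the points with $\lambda_i=0$ --- is both cleaner and is exactly what Looijenga's Theorem~7.6 provides; that is what the paper is citing. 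If you want a self-contained argument, it is safer to run Strebel's variational proof on $C\setminus\{p_i:\lambda_i=0\}$ with the remaining positive weights, and then observe that the resulting differential extends across the deleted points with at worst simple poles, rather than to take a limit of differentials on the full surface.
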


If we restrict to connected (not necessarily unital) metric ribbon graphs with vertices of valence at least three and then put together orbicells as in Remark~\ref{orbicell}, this gives the space $\MCgP$ of $P$-labeled ribbon graphs as in \cite{mupe}. The map $\Psi : \MCgP \to \MDgP$ uses the construction of Proposition~\ref{metric}. The decorations come from taking half the perimeter of the subgraph associated to a boundary cycle or it is zero if the labeled point lies on the graph. The reason why we take the half is because each edge is counted twice, one for each orientation. Theorem~\ref{strebel} provides its inverse. As these maps are continuous $\Psi$ is a homeomorphism.

Now we describe an extension of $\Psi$. The map $\ul{\Psi}: \MUC{g}{P} \to \MUD{g}{P}$ is well defined for non-singular graphs and surfaces. Let $[\Gamma]$ be a point in $cf (\Gamma, Z_\bullet)$. The metric clearly defines a semistable Riemann surface with the aid of the involution $\iota$ and the order function. There is also an induced $P$-labeling. The decoration at each labeled point is induced by taking half the length of the corresponding subgraph associated to a boundary cycle or it is zero when the labeled point lies on the graph. 

To induce decorations by tangent directions we follow the idea illustrated in Figure~\ref{tandir}. On the blowup of a vertex-node choose a parametrization making one of the half-edges coincide with the positive real line and so that there is an equal distance between each half-edge. The reason for choosing this particular parametrization is to make this construction compatible with the complex chart induced at a vertex of a metric ribbon graph (see \cite[Theorem 5.1]{mupe}). The half-edge on the positive real line induces a tangent vector $z_1$ on the induced surface. On the cusp-node there is a natural parametrization of the boundary subgraph by $\S^1$ with opposite orientation up to rotation. This is because the graph has a metric and thus the subgraph associated to the boundary cycle has a well defined length that can be rescaled. Since the graph has a decoration by tangent directions the half-edge on the vertex-node corresponding to the positive real line induces a point on the subgraph associated to the boundary cycle. To fix the parametrization of the boundary subgraph let the positive real line coincide with the previously induced point. This point in turn induces a tangent vector $z_2$ on the node of the surface by going along a vertical trajectory starting at the induced point and taking minus the tangent of such trajectory at the node. Now let the decoration by tangent directions on the surface be $z_1 \otimes z_2$. It is not hard to show that this definition is independent of the choices up to surface isomorphism.

\begin{figure} 
\includegraphics[width=12.5cm]{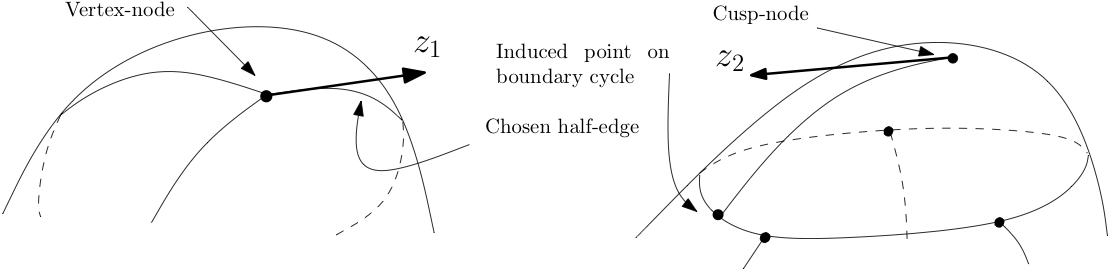}
\caption[Tangent Directions.]{Tangent directions on a semistable ribbon graph inducing tangent directions on the corresponding Riemann surface.} \label{tandir}
\end{figure}

The second main theorem of this paper is the following.

\begin{thm} \label{hmuc}
The map $\ul{\Psi}: \MUC{g}{P} \to \MUD{g}{P}$ is a homeomorphism.
\end{thm}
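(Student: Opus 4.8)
The plan is to promote the homeomorphism $\Psi : \MCgP \to \MDgP$ to the compactified, decorated-by-tangent-directions setting by combining three ingredients already developed in the paper: the bijection of Proposition~\ref{bijec} between $P$-labeled ribbon graphs with permissible sequences and $P$-labeled semistable ribbon graphs with tangent decorations, the Strebel--Jenkins existence and uniqueness result (Theorem~\ref{strebel}), and the identification of $\MUD{g}{P}$ with the blown-up space $\ul{\M}^{dec}_B$ from Corollary~\ref{maincoro1}. First I would show $\ul{\Psi}$ is a bijection: on the open stratum $\MCgP = \M^{comb}_{g,P}$ this is the classical homeomorphism $\Psi$; on a boundary orbicell $cf(\Gamma,Z_\bullet)$ the construction in the text (induced semistable surface, perimeter decorations equal to half the boundary-cycle lengths, tangent directions $z_1\otimes z_2$ built from the parameterizations and vertical trajectories) produces a point of $\MUD{g}{P}$, and I would exhibit the inverse cell-by-cell: given a decorated semistable surface, apply Theorem~\ref{strebel} to each irreducible component of a fixed order to obtain a Strebel--Jenkins differential, extract its critical graph as a metric ribbon graph, and reassemble these graphs using the involution $\iota$ and the order function into a semistable ribbon graph, with the tangent decoration recording the gluing data. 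The compatibility of the two parameterization conventions — the ``equal angle'' parameterization at a vertex-node and the vertical-trajectory convention at a cusp-node — is exactly what makes these two assignments mutually inverse, so the bijectivity reduces to a bookkeeping check order-by-order.

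Next I would establish continuity. Since both spaces are first countable (the same argument as in the proof that $\ul{\M}^{dec}_B \cong \ul{\M}^{dec}_N$ applies: the combinatorial side is a countable orbicell complex and the geometric side carries the metric topology generated by the families $\F_U(x)$), it suffices to test continuity on convergent sequences. For a sequence of metric ribbon graphs whose underlying combinatorial type is constant, continuity is the classical statement. The essential new content is a sequence of conformal structures on $\Gamma$ whose limit collapses some negligible or semistable subset $Z_1$, landing in a boundary orbicell $cf(\Gamma',Z_\bullet)$. Here I would argue that as the edge lengths in $Z_1^{sst}$ shrink, the corresponding closed cylinders on $\surf(\Gamma,l)$ degenerate: the core geodesics pinch, their lengths going to zero at a rate controlled precisely by the decaying edge-lengths (normalized by the total length of that order-level), which is exactly the rate prescribed in the definition of $\F^k(x)$ and in the explicit limits $l(g_i(t)) = t^{k_i}\lambda(n_i)$ appearing in the construction of $\phi$ in the proof of Corollary~\ref{maincoro1}. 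The tangent directions are tracked by noting that the twist parameter of the pinching cylinder is read off from the gluing data, which varies continuously. The inverse map $\ul{\Psi}^{-1}$ is continuous by the same reasoning read backwards, using the continuous dependence of the Strebel--Jenkins differential on the Riemann surface and its decorations, together with the fact that as a geodesic of length $\ell \to 0$ the width of the associated Strebel cylinder, hence the combinatorial edge-lengths of the collapsing subgraph, behave like $\ell$ up to normalization.

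The main obstacle I anticipate is the behaviour of tangent directions at cusp-nodes versus vertex-nodes, and in particular the role of semistable circles/semistable spheres. A semistable sphere carries two nodes but its automorphism group acts by rotation so that the pair of real rays at its nodes contributes only one effective parameter; on the combinatorial side a semistable circle is a ribbon graph with no vertices, trivial automorphism group, and two boundary cycles, and the decoration by tangent directions on the two associated nodal pairs must be shown to match this one-dimensional family after passing to gluing-isomorphism classes. Checking that the twist at the degenerating cylinder on the surface side corresponds under $\ul{\Psi}$ to the gluing-parameter on the graph side — and that inserting a semistable circle between two associated components on the graph side matches inserting a semistable sphere on the surface side, with the orders shifting consistently — is where the bulk of the careful argument lies. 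Once that correspondence is pinned down at a single node, the general case follows by induction on the order, exactly as in the inductive construction of the perimeter and order functions in the proof of the homeomorphism $\ul{\M}^{dec}_B \cong \ul{\M}^{dec}_N$; the global statement then follows because $\ul{\Psi}$ is an orbicell map, it is a bijection, and both spaces are compact Hausdorff, so a continuous bijection is automatically a homeomorphism.
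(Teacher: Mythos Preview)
Your proposal is correct and follows the same overall architecture as the paper: bijection via Strebel--Jenkins applied component-by-component, continuity of $\ul{\Psi}$, and the compact Hausdorff argument to conclude. The paper's own proof is far terser --- it literally reduces to the $\ol{\Psi}$ case ``by keeping track of the decorations by tangent directions'' --- and for the continuity step it does not argue directly via degenerating cylinders and the families $\F_U(x)$ as you do, but instead factors $\ul{\Psi}=(\ul{\Phi},\lambda)$ and imports the continuity of $\ul{\Phi}$ from the extension of Looijenga's theorem stated just before. Your more hands-on route through the pinching geodesics and twist parameters is a legitimate alternative and has the virtue of being self-contained, but it is more work than the paper actually does. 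One caution: your plan to also prove continuity of $\ul{\Psi}^{-1}$ via continuous dependence of Strebel--Jenkins differentials is both unnecessary (compact Hausdorff already gives it once $\ul{\Psi}$ is a continuous bijection) and harder than you suggest --- the paper explicitly flags in the Remark following the proof that this direction would require extending Zvonkine's convergence analysis, so you should drop that step rather than treat it as routine.
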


The proof is a generalization of \cite[Theorem 11.5]{loo} quoted below. This generalization requires a careful analysis of the decorations on labeled points and decorations by tangent directions which we now present.

Using again Strebel-Jenkins differentials we can produce the inverse $\ul{\Psi}^{-1}$ making this map a bijection. This inverse assigns to a class of a $P$-labeled decorated semistable Riemann surfaces the isomorphism class of a metric semistable ribbon graph via Theorem~\ref{strebel}.

By forgetting the decorations by tangent directions we get another surjection $\ol{\Psi} : \MOC{g}{P} \to \MOD{g}{P}$. Finally by forgetting the decorations and order on the semistable Riemann surfaces we get surjections $\ol{\Phi}: \MOC{g}{P} \to \MO{g}{P}$ and $\ul{\Phi}: \MUC{g}{P} \to \MU{g}{P}$.

Using the notation of \cite{loo} we have a projection $\MOC{g}{P} \to \Gamma \backslash \hat{A}$  where $\hat{A}$ is a cellular decomposition of the Teichm\"{u}ller analogue for the compactified decorated moduli space related to the arc complex and $\Gamma$ is the mapping class group acting on $\hat{A}$. The definition of $\hat{A}$ is connected to the definition of the combinatorial moduli space by taking the dual graph. The preimage of a point under this map corresponds with decorations including semistable spheres. In fact we have the following

\begin{thm} (Looijenga) The map $\Gamma \backslash \hat{A} \to \MO{g}{P}$ is a continuous surjection with the preimage of a point being the space of decorations by non-negative real numbers after collapsing semistable spheres.
\end{thm}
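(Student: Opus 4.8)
The plan is to reduce the statement to Looijenga's analysis in \cite{loo} by means of the maps already at hand. Write $q:\Gamma\backslash\hat{A}\to\MO{g}{P}$ for the asserted map and $p:\MOC{g}{P}\to\Gamma\backslash\hat{A}$ for the projection recalled above, whose fibres record precisely the combinatorial data distinguishing a semistable ribbon graph from its total collapse. The first step is to observe that the forgetful surjection $\ol{\Phi}:\MOC{g}{P}\to\MO{g}{P}$ factors as $\ol{\Phi}=q\circ p$. Indeed $p$ discards only the data needed to resolve a node into a two-pointed sphere, whereas $\ol{\Phi}$ discards in addition the order function and the decorations; moreover the only strictly semistable components allowed are semistable spheres, so collapsing them (using conditions (1)--(4) of Section~\ref{compactifications}) produces exactly the Deligne--Mumford stable model. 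Hence any two points of $\MOC{g}{P}$ with the same image under $p$ carry the same underlying stable curve, and $\ol{\Phi}$ descends along $p$ to the map $q$. Now $p$ is a cellular surjection of compact orbicell complexes onto a Hausdorff space, hence closed, hence a quotient map; together with the continuity and surjectivity of $\ol{\Phi}$ this forces $q$ to be a continuous surjection.

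The next step is to identify the fibres of $q$. Fix $[C]\in\MO{g}{P}$ with normalisation $\hat{C}=\coprod C_i$, node set $N$ and involution $\iota$. A point of $\ol{\Phi}^{-1}([C])$ is a decoration $(\lambda,\ord)$ on some semistable surface whose stabilisation is $C$; combining the homeomorphism $\ol{\Psi}:\MOC{g}{P}\to\MOD{g}{P}$ with the descriptions of $\MOD{g}{P}$ in Corollary~\ref{maincoro2} and Lemma~\ref{exun}, this fibre is a finite quotient (by $\aut(C)$ together with graph automorphisms) of a polytope assembled from the simplices $\Delta_{\hat{P}_k\cup\hat{N}_k}$ over the orders $k$, enlarged by the extra faces a blown-up simplex contributes for each semistable sphere. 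Applying $p$ collapses each of these semistable-sphere faces, since $p$ forgets exactly the resolution data that produced them; what survives, after a global rescaling and modulo $\aut(C)$, is the space of perimeter functions $\lambda:P\cup N\to[0,\infty)$ subject to the compatibility conditions at the nodes — that is, the space of decorations of $C$ by non-negative real numbers with semistable spheres collapsed. This is exactly Looijenga's description of $q^{-1}([C])$ under the identification of $\hat{A}$ with the compactified arc complex via the dual graph (\cite[Theorem 5.11]{loo}), which completes the argument.

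The step I expect to be the main obstacle is the translation itself: one must carefully align the arc-complex language of \cite{loo}, which lives on the dual side, with the semistable ribbon graph language used here, and verify that $p$ induces on each fibre polytope precisely the collapse of the semistable-sphere faces rather than some coarser identification, so that $q$ lands in $\MO{g}{P}$ and not in a partial blowup of it. A subsidiary check is that $p$ is a quotient map, for which it is enough that $\MOC{g}{P}$ be compact — it is the image of the compact space $\MUC{g}{P}$ under the torus quotient — and that $\Gamma\backslash\hat{A}$ be Hausdorff, which follows, just as for $\MOD{g}{P}$, from the explicit neighbourhoods in the orbicell complex.
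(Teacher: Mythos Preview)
The paper does not prove this theorem at all: it is stated as a result of Looijenga, imported from \cite{loo}, and then used as input for the subsequent results. In particular, the theorem on $\ol{\Phi}$ is introduced with ``we can extend Looijenga's main theorem to the following'', and the homeomorphism $\ol{\Psi}:\MOC{g}{P}\to\MOD{g}{P}$ is proved by invoking the continuity of $\ol{\Phi}$. So in the paper's logical architecture, Looijenga's theorem precedes and underpins both the continuity and surjectivity of $\ol{\Phi}$ and the fact that $\ol{\Psi}$ is a homeomorphism.

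Your argument is therefore circular. You explicitly appeal to the continuity and surjectivity of $\ol{\Phi}$ to deduce that $q$ is a continuous surjection, and you use the homeomorphism $\ol{\Psi}$ together with Corollary~\ref{maincoro2} and Lemma~\ref{exun} to identify the fibres; but within this paper all of those facts rest on the very theorem you are trying to prove. The same applies to your compactness argument: you take $\MUC{g}{P}$ compact because it is homeomorphic to $\MUD{g}{P}$ via $\ul{\Psi}$, whose proof is again downstream of Looijenga's result. If you want an independent proof, you must work directly with the arc-complex description of $\hat{A}$ and the convergence of Strebel differentials as in \cite{loo}, rather than with the maps $\ol{\Phi}$, $\ol{\Psi}$, $\ul{\Psi}$ that the paper builds on top of it.
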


By keeping track of the extra decorations on semistable spheres and using the projection $\MOC{g}{P} \to \Gamma \backslash \hat{A}$ we can extend Looijenga's main theorem to the following result.

\begin{thm} The map $\ol{\Phi}: \MOC{g}{P} \to \MO{g}{P}$ is a continuous surjection with  the preimage of a point being the space of all semistable ribbon graphs generating the same conformal class in the Deligne-Mumford moduli space.
\end{thm}

The following result easily follows from the previous one by keeping track of the decoration by tangent directions. It never appeared in the literature because the space $\MUC{g}{P}$ is new.

\begin{prop}
The map $\ul{\Phi}: \MUC{g}{P} \to \MU{g}{P}$ is a continuous surjection with preimages the space of all semistable ribbon graphs decorated by tangent directions generating the same conformal class in the real oriented blowup of the Deligne-Mumford moduli space.
\end{prop}

In order to extend $\Psi$ to the boundary we need to extract decorations from a metric on a ribbon graph. Given a metric ribbon graph $(\Gamma,l)$ we can construct a function $\lambda: C(\Gamma) \to \R_+$ defined as half the total length of the associated boundary subgraph (counting twice those edges with both half-edges in the boundary cycle). This is called a {\bf perimeter function}. For a metric $P$-labeled semistable ribbon graph the perimeter function is defined by $\lambda: x(P) \sqcup N \to \R_{\ge 0}$ vanishing only at the points that correspond with vertices of the graph  and assigning to each cusp half the perimeter of the corresponding boundary subgraph (counting twice those edges with both half-edges in the boundary cycle).

It is now possible to redefine the maps $\ol{\Psi}=(\ol{\Phi},\lambda)$ and $\ul{\Psi}=(\ul{\Phi},\lambda)$.

\begin{thm}
The map $\ol{\Psi}: \MOC{g}{P} \to \MOD{g}{P}$ is a homeomorphism. \label{mainthm3}
\end{thm}

\begin{proof}
The function $\ol{\Psi}$ has an inverse constructed from Strebel's theorem. Such inverse assigns to a decorated $P$-labeled semistable Riemann surface a $P$-labeled semistable ribbon graph with the metric induced from the conformal structure on the surface and transferring the order function to the graph component by component. This makes $\ol{\Psi}$ a bijection. Since both spaces are Hausdorff and compact it is enough to show continuity of $\ol{\Psi}$ to show that it is a homeomorphism. The continuity of $\ol{\Phi}$ can be extended to the continuity of $\ol{\Psi}$ by keeping track of the decorations by non-negative real numbers.
\end{proof}

\begin{proof}[Proof of Theorem~\ref{hmuc}] This is a generalization of the previous theorem obtained by keeping track of the decorations by tangent directions.
\end{proof}

\begin{rem}
The continuity of $(\ol{\Psi})^{-1}$ can be proved provided one can extend the proof in \cite{zvon} by a careful analysis of the convergence of Strebel-Jenkins differentials via the normalization described in Section~\ref{compactifications}.
\end{rem}

One of the difficulties in showing the existence of the homeomorphisms $\ol{\Psi}$ and $\ul{\Psi}$ arises from defining the spaces $\MUD{g}{P}$ and $\MOD{g}{P}$ precisely. This allows us to interpret the space of decorations as combinatorial data that is possible to embed in the definition of a stable Riemann surface thus giving the desired homeomorphisms.

\begin{coro}
We also get orbicell decompositions of $\MOC{g}{P} / \FS_P$, $\MUC{g}{P} / \FS_P$ homeomorphic to $\MOD{g}{P} / \FS_P$, $\MUD{g}{P} / \FS_P$ respectively.
\end{coro}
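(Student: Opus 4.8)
The plan is to observe that everything in sight is $\FS_P$-equivariant, and then to invoke the elementary fact that an equivariant homeomorphism between spaces with a finite group action descends to a homeomorphism of quotients. First I would make the $\FS_P$-actions explicit: on $\MOC{g}{P}$ and $\MUC{g}{P}$ a permutation $\tau$ acts on the class of a $P$-labeled (semistable, metric) ribbon graph $(\Gamma,x)$ by $(\Gamma,x)\mapsto(\Gamma,x\circ\tau^{-1})$, while on $\MOD{g}{P}$ and $\MUD{g}{P}$ it permutes the labeled points of a decorated semistable surface together with the induced relabeling of the perimeter function and of the decorations by tangent directions. Since isomorphisms in each of these categories are required to fix the labels pointwise, these actions are well defined on isomorphism classes; and since $\FS_P$ is finite, the quotient maps are open and the quotients of the compact Hausdorff spaces $\MOC{g}{P}\cong\MOD{g}{P}$ and $\MUC{g}{P}\cong\MUD{g}{P}$ are again compact Hausdorff.

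The heart of the argument is to check that $\ol{\Psi}$ and $\ul{\Psi}$ are $\FS_P$-equivariant. Here I would point out that the constructions underlying these maps are \emph{label-blind}: the piece-wise Euclidean metric on $\surf(\Gamma)-C(\Gamma)$, the complex structure of Proposition~\ref{metric}, the order function, the involution $\iota$, the perimeter values $\lambda$ read off from boundary-cycle lengths, and the decorations by tangent directions all depend only on the underlying (semistable) metric ribbon graph — the labeling $x$ merely specifies which vertices and cusps are labeled and with which names. Consequently, replacing $x$ by $x\circ\tau^{-1}$ produces precisely $\surf(\Gamma)$ with its labeled points permuted by $\tau$ and all the decoration data permuted accordingly, so $\ol{\Psi}(\tau\cdot[\Gamma])=\tau\cdot\ol{\Psi}([\Gamma])$ and likewise for $\ul{\Psi}$.

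With equivariance in hand, the rest is formal: the composite $\MOC{g}{P}\xrightarrow{\ol{\Psi}}\MOD{g}{P}\to\MOD{g}{P}/\FS_P$ is continuous and $\FS_P$-invariant, hence factors through a continuous bijection $\MOC{g}{P}/\FS_P\to\MOD{g}{P}/\FS_P$ (bijective because $\ol{\Psi}$ carries orbits bijectively onto orbits), and a continuous bijection from a compact space to a Hausdorff space is a homeomorphism; the same argument applies verbatim with $\ul{\Psi}$ in place of $\ol{\Psi}$. I would then note that the orbicell structure descends: $\FS_P$ permutes the cells $cf(\Gamma)$, and the further quotient of the rational cell $cf(\Gamma)$ by the finite subgroup of $\FS_P$ stabilising $[\Gamma]$ is again a rational cell, so $\MOC{g}{P}/\FS_P$ and $\MUC{g}{P}/\FS_P$ are orbicell complexes.

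I expect the main obstacle to be the verification of equivariance at the boundary of the moduli spaces, where one must confirm that the order function, the involution, the semistable-sphere bookkeeping, and the tangent-direction data all transform correctly under relabeling. This is conceptually routine — none of that structure refers to the labels beyond singling out the labeled points — but it requires going through the somewhat intricate definitions of $\MUD{g}{P}$, $\MUC{g}{P}$ and the map $\ul{\Psi}$ with the relabeling carried along at each step.
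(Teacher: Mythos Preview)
Your proposal is correct and is essentially what the paper has in mind: the corollary is stated without proof, as an immediate consequence of the homeomorphisms $\ol{\Psi}$ and $\ul{\Psi}$ just established, and your argument spells out precisely the equivariance-and-descent reasoning that makes this immediate. The only difference is one of detail---the paper leaves the $\FS_P$-equivariance of $\ol{\Psi}$, $\ul{\Psi}$ and the descent of the orbicell structure entirely implicit, whereas you verify them explicitly.
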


\begin{rem}
By Corollaries~\ref{maincoro1} and \ref{maincoro2} the surjective maps $\ol{\pi}: \MOC{g}{P} \to \MO{g}{P}$, $\ul{\pi}: \MUC{g}{P} \to \MU{g}{P}$ are homotopy equivalences and thus a chain complex computing the homology of the domains will compute the homology of the target spaces.
\end{rem}

The spaces $\MUD{g}{P} / \FS_P$ are the decorated analogues of the spaces used in \cite{cos:gp} to construct a solution to the quantum master equation. Using the last corollary and extending the previous remark it might be possible to describe a solution to the master equation in terms of ribbon graphs.

Another interesting question is how to extend the present result for the moduli of bordered Riemann surfaces and whether that also yields a combinatorial solution to the quantum master equation as in \cite{hvz}.

\bibliographystyle{amsalpha}
\bibliography{cms}

\end{document}